\crefname{hypothesis}{Hypothesis}{Hypotheses}
\title{A Discontinuous Galerkin Method for Optimal Control of the Obstacle Problem\thanks{Submitted to the editors DATE.
\funding{This work is partially supported by NSF grant DMS-2110263 and the Air Force Office of Scientific
Research (AFOSR) under Award NO: FA9550-22-1-0248}}}
\author{Harbir Antil 
	\and Rohit Khandelwal \and Umarkhon Rakhimov
	\thanks{Department of Mathematical Sciences and the Center for Mathematics and Artificial Intelligence (CMAI), George Mason University, Fairfax, VA 22030, USA.	 (\email{hantil@gmu.edu}, \email{rkhandel@gmu.edu}, \email{urakhimo@gmu.edu}).}}
\newcommand{\vertiii}[1]{{\left\vert\kern-0.25ex\left\vert\kern-0.25ex\left\vert #1 
		\right\vert\kern-0.25ex\right\vert\kern-0.25ex\right\vert}}
\def\xA{\mathcal{A}}
\def\cP{\mathcal{P}}
\def\cW{\mathcal{W}}
\def\cQ{\mathcal{Q}}
\def\cL{\mathcal{L}}
\def\cK{\mathcal{K}}
\def\cA{\mathcal{A}}
\def\cN{\mathcal{N}}
\def\cH{\mathcal{H}}
\def\cV{\mathcal{V}}
\def\cM{\mathcal{M}}
\def\cT{\mathcal{T}}
\def\cE{\mathcal{E}}
\def\smean#1{\{\hskip -3pt\{#1\}\hskip -3pt\}}
\def\sjump#1{[\hskip -1.5pt[#1]\hskip -1.5pt]}
\begin{document}

\maketitle

\begin{abstract}
This article provides quasi-optimal a priori error estimates for an optimal control problem constrained by an 
elliptic obstacle problem where the finite element discretization is carried out using the symmetric interior 
penalty discontinuous Galerkin method. The main proofs are based on the improved $L^2$-error estimates 
for the obstacle problem, the discrete maximum principle, and a well-known quadratic growth property. The standard (restrictive) assumptions on mesh are not assumed here.
\end{abstract}

\begin{keywords}
finite element, discontinuous Galerkin, variational inequalities, optimal control problems, quasi-optimal error estimates
\end{keywords}

\begin{MSCcodes}
	49J10,  	
	49J40,  	
	49J20,  	
	49J52,  	
	49J53,  	
	49M25,  	
	65N30,  	
	74M15  	
\end{MSCcodes}

\section{Introduction}\label{sec:Intro}

Free boundary problems governed by physical principles are ubiquitous in science and 
engineering. In particular variational inequalities of first kind, such as obstacle problem, arise 
in elasticity, fluid filtration in porous media and finance. Special attention has been given to 
study the obstacle problem which acts as a model problem in many of these applications. 
It is also natural to consider optimization problems governed by variational inequalities as 
constraints which is the focus here. 

This article considers an optimal control problem governed by an elliptic variational
inequality (EVI) of the first kind (obstacle problem) \cite{glowinski1984numerical,rodrigues1987obstacle} as constraints.
Let $\Omega $ be a bounded domain in $\mathbb{R}^d$ where $d \in \{2,3\}$ with Lipschitz boundary 
$\partial \Omega$. For $(u,z)$ denoting the state-control pair, the optimal control problem is given by
\begin{equation} \label{problem1}
\tag{$A$} \hspace*{-2cm}
\begin{cases}
\text{min } \left\{J(u,z):= \frac{1}{2} \|u-u_d\|^2_{L^2(\widehat{\Omega})} + \frac{\nu}{2} \|z\|^2_{L^2(\Omega)} \right\}, & \\ 
\text{subject to } ~~a(u, v-u) \geq (z, v-u) \quad \forall~ v \in \cK, & \\ \text{and } u \in \cK: =\{w \in \cV:=H^1_0(\Omega)~:~w(x) \geq g(x)~\text{a.e. in } \Omega\},
\end{cases}
\end{equation}
\noindent
where the open set $\widehat{\Omega}$ is either ${\widehat{\Omega}= \Omega}$ or ${\widehat{\Omega}= \Omega_0 \subset \subset \Omega.}$ Here, $a(s,t):= \int_{\Omega} \nabla s \cdot \nabla t~dx~\forall s , t \in \cV$ is the continuous, $\cV$-elliptic bilinear form on $\cV \times \cV$, $(\cdot, \cdot)$ denotes the $L^2(\Omega)$ inner-product and $\|\cdot\|_{L^2(\widehat{\Omega})} $ is the norm on ${L^2(\widehat{\Omega})}$. Let $u_d \in {L^2(\widehat{\Omega})}$ be the desired state and $\nu >0 $ is the regularization parameter. The exact functional-analytic setting of Problem \eqref{problem1} is stated in the upcoming sections.

This paper derives a priori error estimates for Problem \eqref{problem1} using the symmetric interior penalty discontinuous Galerkin (DG) finite element method (FEM). The state $u$ represents either global matching state, i.e., $\widehat{\Omega}=\Omega$ or local matching state $\widehat{\Omega}=\Omega_0$ where $\Omega_0 \subset \subset \Omega$. In both cases, finite element error estimates for $z$ and $u$ are derived. In the local matching setting, error estimates are quasi-optimal i.e., $O(h)$ for control. In the global setting, we are limited by the DG boundary conditions and we obtain $O(h^{\frac12})$.

DG methods provide greater flexibility to handle complex geometries and allow discretizations with hanging nodes and different degrees of polynomial approximation on different elements. In addition, they are more flexible to locally adapt the discretization or the degree of the discrete basis functions which captures better approximation of the solution. For a general reference on DG methods, we refer to \cite{cockburn2012discontinuous}. Furthermore, the articles \cite{arnold1982interior,brezzi2000discontinuous,arnold2000discontinuous} discuss DG methods for elliptic problems. For related work on obstacle problem, we refer to \cite{WHC:2010:DGOP}. 

The literature on the optimal control problem constrained by PDEs is indeed significant (see \cite{falk1974error,FTroeltzsch_2010a,MHinze_RPinnau_MUlbrich_SUlbrich_2009a,KIto_KKunisch_1990b,HA:2018:PDEOCP}) but this is still a highly active research area. For works on a priori error analysis for optimal control problems governed by PDEs, we refer to \cite{falk1973approximation,geveci1979approximation,arada2002error,deckelnick2007convergence,leykekhman2012local}. Optimal control for the obstacle problem is a challenging problem due to non-differentiability of the solution mapping $S$ between the control and state variables. In the seminal works \cite{Mignot:1976:CEVI, Mignot:1984:OCPVI}, the authors demonstrate the directional differentiability of the map $S$ and, later introduced the optimality conditions of the strong stationarity type for Problem \eqref{problem1}. The authors in \cite{Kunisch:2012:SOPS} built upon the ideas from \cite{Mignot:1976:CEVI} and derived the second order strong stationarity conditions for Problem \eqref{problem1}. Haslinger and Roub\'i$\check{c}$ek \cite{haslinger1986optimal} first discussed the finite element approximations for the optimal control of variational inequality of the first kind. The article  \cite{haslinger1986optimal} shows convergence of the finite element approximation but does not talk about order of convergence. 

The article \cite{Meyer:2013:OCPOB} derived quasi-optimal a priori error estimates for Problem \eqref{problem1} using conforming finite element method based on certain assumptions on the mesh and given data. The analysis in \cite{Meyer:2013:OCPOB} is valid for convex $\Omega \subset \mathbb{R}^d$ where $d \in \{2,3\}$ with polygonal boundary $\partial \Omega$. The Discrete Maximum Principle (DMP) \cite{burman2004discrete} is a crucial tool to derive the improved $L^2$-error estimates for the obstacle problem. In \cite{Meyer:2013:OCPOB}, the DMP holds true in the sense that the conforming stiffness matrix is a weakly diagonally dominant M-matrix (see \cite{fiedler2008special}), which can be restrictive in applications. Later, \cite{dond2016nonconforming} proved the error estimates for the same model problem using the Crouzeix-Raviart (CR) nonconforming finite elements. This analysis is valid for bounded convex and polygon domain $\Omega \subset \mathbb{R}^2$. Furthermore, the  discretization is chosen in such a way that the quasi-uniform triangulation or a Friedrich-Keller triangulation with CR elements generates a global stiffness matrix which is a weakly domainated M-matrix, a slight variant of the DMP. This can again be challenging to fulfill in practice.

In the DG setting, there are only two works that studies the DMP for the Poisson problem, i.e., \cite{horvath2013discrete} and \cite{badia2015discrete}. The results in \cite{horvath2013discrete} are valid for dimension one, whereas \cite{badia2015discrete} proposes a new (variational) definition of the DMP in the DG setting for $d>1$. Here, we follow the approach that is discussed in \cite{
	badia2015discrete}. Our analysis is based on the quadratic growth property (Proposition \ref{grpp}) and the strong stationarity conditions (Theorem \ref{thm:main1}). The main contributions of our work are as follows: 

\smallskip
\noindent
$\bullet$ We do not assume/impose any restrictive assumption on our mesh as we have employed the definition for the local discrete maximum (respectively, minimum) and use the key property that DMP works in a local sense for DG methods, i.e., for all $K \in \mathcal{T}_h$, where $\cT_h$ is the triangulation. \\
$\bullet$  Derive the convergence rate of order $\frac{1}{2}- \epsilon$ for every $\epsilon> 0$ for the control $z$ 
when $\widehat\Omega = \Omega$. 
This result require $\Omega$ to be convex leading to $H^2(\Omega)$ regularity for the solution to the obstacle problem. \\
$\bullet$ In case of local matching state variable, we first derive novel quasi-optimal estimates for the state $\| u(z) - u_h(z) \|_{L^2(\Omega_0)} \lesssim h^{2-2\epsilon}\|u\|_{W^{2,p}(\Omega)}$. We use these estimates to derive quasi-optimal error estimates for control approximation of order $1-\epsilon$. These results require $u \in W^{2,p}$, for this regularity, we have assumed $\Omega$ to be $C^{1,1}$. Unfortunately we could not find a reference for this regularity for convex domains. 

The article is organized as follows: In Section \ref{sec2}, we begin with some preliminaries of Problem \eqref{problem1} and state the regularity estimates corresponding to the solution to the elliptic variational inequality. Next, we formulate the variational inequality into an equality with the help of continuous Lagrange multiplier $\zeta$ which acts as a function of both $\cV^*$ and $L^2(\Omega)$ in Section \ref{sec2}. In Section \ref{sec3}, we first highlight that the solution operator $S$ which maps control to state is not G\^ateaux-differentiable and later, we represent the Problem \eqref{problem1} into the mathematical problem with complementarity constraints (MPCCs) in function spaces. Using standard arguments, we state the existence and regularity result of an optimal control $\tilde{z}$ in Section \ref{sec3}. After that we state the strong stationarity conditions for any arbitrary obstacle using the ideas laid initially by Mignot in \cite{Mignot:1976:CEVI} for zero obstacle $g=0$. One of the key ingredient in our error analysis is the quadratic growth property satisfied by an optimal control $\tilde{z}$ (due to authors in \cite{Kunisch:2012:SOPS}) stated in Section \ref{sec3}. We introduce some notations and DG formulation in Section \ref{sec4}. Moreover, the matrix version of the strong stationarity conditions are discussed in Section \ref{sec4}. The main convergence results of our analysis require an improved a priori estimate for the $L^2$-error of the finite element discretization of the obstacle problem, which we have shown in Section \ref{sec5}. The idea is to use the Discrete Maximum Principle and $L^{\infty}$-error estimates for the Poisson equation which we have highlighted in Section \ref{sec5}. The a priori error estimates for $u$ and $z$ in both global and local matching cases are derived in Section \ref{sec5}.  Finally, conclusions and future directions are discussed in Section \ref{sec6}.

\section{Preliminaries and regularity results} \label{sec2}

The classical obstacle problem \cite{glowinski1984numerical,rodrigues1987obstacle} reads as:
\begin{subequations}
\begin{align}
a(&u, v-u) \geq (z, v-u) \quad \forall~ v \in \cK,  \label{obs3} \\
u &\in \cK=\{w \in \cV~:~w(x) \geq g(x)~\text{a.e. in } \Omega\}, \label{obs4}
\end{align}
\end{subequations}
where $(\cdot, \cdot)$ denotes the $L^2(\Omega)$ inner-product. In our analysis, $W^{m,p}(\Omega)$ 
denotes the Sobolev space equipped with the norm $\|\cdot\|_{W^{m,p}}$ and seminorm 
$|\cdot|_{W^{m,p}}$ \cite{s1989topics}. We assume that the given obstacle satisfies 
$g \leq 0 ~\text{a.e. on } \partial\Omega$, $g \in H^1(\Omega)$ and some other precise conditions on 
the given obstacle $g$ will be discussed later. The closed and convex set $\cK$ is non-empty because 
$g^+:= \max\{g,0\} \in \cK$. 
Next result collects the existence and regularity results of solution to \eqref{obs3}--\eqref{obs4} (see \cite[Lemma 2.1]{Kunisch:2012:SOPS} and \cite[Theorem~6.2]{DKindelehrer_GStampacchia_1980}, \cite[Chapter 5, Corollary 2.3 and Theorem 2.5]{rodrigues1987obstacle}).
\begin{theorem}[Existence and regularity of the obstacle problem] \label{asp1} 
	The following holds:
	\begin{enumerate}[(a)]
		\item If $\Omega \subset \mathbb{R}^d$ is a polygonal domain, $g \in H^1(\Omega)$, and $z \in H^{-1}(\Omega)$, 
			then there exists a unique solution to \eqref{obs3}-\eqref{obs4} satisfying $u \in H^1_0(\Omega)$.
		\item If $\Omega$ is convex or $\partial \Omega$ is $C^{1,1}$, $g \in H^2(\Omega)$, and $z \in L^2(\Omega)$, 
		then the solution to \eqref{obs3}-\eqref{obs4} satisfies $u \in H^2(\Omega) \cap H^1_0(\Omega)$.
		\item Let $\partial \Omega$ be $C^{1,1}$, $z \in L^p(\Omega) \cap H^{-1}(\Omega)$ and 
			$g \in W^{2,p}(\Omega)$, with $1 < p < + \infty$ then the solution to \eqref{obs3}-\eqref{obs4}  
		satisfies $u \in W^{2,p}(\Omega) \cap H^1_0(\Omega)$.
	\end{enumerate}
\end{theorem}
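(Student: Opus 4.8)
The plan is to treat the three parts in increasing order of regularity, using part (a) as the existence backbone for (b) and (c) and reducing the higher regularity to elliptic regularity for the Poisson equation through the Lewy--Stampacchia inequality. For part (a), since $a$ is symmetric, bounded, and $\cV$-elliptic (coercivity coming from the Poincar\'e inequality on $H^1_0$), the variational inequality \eqref{obs3}--\eqref{obs4} is exactly the first-order condition for minimizing the strictly convex, coercive, weakly lower semicontinuous energy $E(v)=\frac12 a(v,v)-\langle z,v\rangle$ over the nonempty, closed, convex set $\cK$ (nonempty because $g^+\in\cK$). I would invoke the direct method / Lions--Stampacchia theorem: coercivity bounds a minimizing sequence, reflexivity of $\cV$ gives weak compactness, weak lower semicontinuity together with weak closedness of $\cK$ produces a minimizer, and strict convexity gives uniqueness. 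The equivalence with the VI follows from the standard one-sided perturbation argument $u+t(v-u)$, $t\in[0,1]$. Since $z\in H^{-1}(\Omega)=\cV^*$ the functional is continuous, so all hypotheses hold and the unique $u\in H^1_0(\Omega)$ exists.

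For parts (b) and (c) the key observation is that the VI encodes a complementarity system. Testing \eqref{obs3} with $v=u+\phi$, $\phi\in H^1_0(\Omega)$, $\phi\geq 0$, shows that $\mu:=-\Delta u-z\geq 0$ as a nonnegative distribution, while admissible perturbations give the complementarity $\langle\mu,u-g\rangle=0$, so $\mu$ is supported on the contact set $\{u=g\}$, where formally $-\Delta u=-\Delta g$. The heart of the argument is the Lewy--Stampacchia inequality $z\leq -\Delta u\leq \max(z,-\Delta g)$. I would establish it by penalization: for $\epsilon>0$ solve the semilinear problem $a(u_\epsilon,v)=(z,v)+\frac{1}{\epsilon}((u_\epsilon-g)^-,v)$ for all $v\in\cV$, so that $\mu_\epsilon:=\frac{1}{\epsilon}(u_\epsilon-g)^-\geq 0$ and $-\Delta u_\epsilon=z+\mu_\epsilon$. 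A truncation/comparison test against the equation satisfied by $u_\epsilon-g$ bounds $\mu_\epsilon$ above by $(-\Delta g-z)^+$, uniformly in $\epsilon$; these uniform bounds let me pass to the limit $u_\epsilon\to u$, $\mu_\epsilon\rightharpoonup\mu$ and transfer the bound to $\mu$.

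The remaining step is purely linear. Under the hypotheses of (b), $z\in L^2$ and $g\in H^2$ give $-\Delta g\in L^2$, whence $-\Delta u=z+\mu\in L^2(\Omega)$; elliptic regularity for the Poisson problem with homogeneous Dirichlet data on a convex or $C^{1,1}$ domain then upgrades $u\in H^1_0$ to $u\in H^2(\Omega)\cap H^1_0(\Omega)$. Under the hypotheses of (c), $z\in L^p$ and $g\in W^{2,p}$ give $-\Delta u\in L^p(\Omega)$, and the Calder\'on--Zygmund / Agmon--Douglis--Nirenberg $L^p$ theory on a $C^{1,1}$ domain yields $u\in W^{2,p}(\Omega)\cap H^1_0(\Omega)$.

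The main obstacle will be the Lewy--Stampacchia upper bound, that is, showing the reaction measure $\mu$ is not merely nonnegative but lies in $L^2$ (respectively $L^p$): the lower bound $\mu\geq 0$ is immediate, but extracting integrability requires the penalization together with the delicate uniform estimate on $\mu_\epsilon$ and boundary regularity. Once $-\Delta u$ is known to belong to the appropriate Lebesgue space, (b) and (c) reduce to off-the-shelf elliptic regularity. Since all three statements are classical, in the write-up I would cite the indicated references rather than reproduce these arguments in full; the sketch above is how one reconstructs them.
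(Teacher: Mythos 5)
Your proposal is correct and effectively coincides with the paper's treatment: the paper gives no proof of Theorem~\ref{asp1}, citing instead the classical references (Kinderlehrer--Stampacchia, Rodrigues, Kunisch--Wachsmuth), and your sketch---direct method/Lions--Stampacchia for existence and uniqueness in (a), then the Lewy--Stampacchia inequality obtained by penalization followed by $H^2$ (convex or $C^{1,1}$) and Calder\'on--Zygmund $W^{2,p}$ ($C^{1,1}$) elliptic regularity for (b) and (c)---is exactly the standard argument underlying those citations. Since you also conclude by deferring to the same references rather than reproducing the details, there is no substantive difference between the two.
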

\begin{remark}
	In Theorem \ref{asp1}(c), note that $L^p(\Omega) \subset  H^{-1}(\Omega)$ for 
		$d=2$ when $1 < p< + \infty $ and $L^p(\Omega) \subset  H^{-1}(\Omega)$
		for $d=3$ when $\frac{6}{5} \le p < +\infty$. These conditions are always
		satisfied by our control $z$ which is at least $L^2(\Omega)$.

\end{remark}
	In case of bounded Lipschitz domain $\Omega$, there exists a unique
	Lagrange multiplier $\zeta \in \cV^*$ (the dual of $\cV$) \cite{veeser2001efficient} such that  
	\begin{align}
	a(u,v) &= ( z, v) + \langle \zeta , v \rangle_{-1,1} \quad \forall~v \in \cV, \label{eq:C4} \\
	\langle \zeta, u- g \rangle_{-1,1} =0 &~, ~\zeta \geq 0 ~\text{and } u(x) - g(x)\geq 0~~\text{ a.e. in } \Omega, \label{eq:C5}
	\end{align}
where $\langle \cdot, \cdot \rangle_{-1,1}$ denotes the duality pairing between $\cV$ and $\cV^*$. The equation
\eqref{eq:C5} is known as the complementarity equation with $\zeta \geq 0$ on $\Omega' \subseteq \Omega$ if $\langle \zeta, \phi \rangle_{-1,1} \geq 0$, for all $0 \leq \phi \in H^1_0(\Omega')$.

	We can also equivalently rewrite the bilinear form $a(\cdot,\cdot)$ as follows. 
	We have $\cA \in \cL(\cV, \cV^*)$ i.e., the set of all bounded linear operators from $\cV$ to $\cV^*$ such that
\begin{align} \label{keyprop}
a(v,w)= \langle \cA v, w \rangle_{-1,1} \qquad \forall~v ,w \in \cV.
\end{align}
For any $f \in \cV^*$, the norm $\|f\|_{\cV^*}$ is defined as
\begin{align} 
\|f\|_{\cV^*} := \underset{v  \in \cV,~ v \neq 0}{\text{sup}} \frac{\langle f, v \rangle_{-1,1}}{\|\nabla v\|_{L^2(\Omega)}}.
\end{align}

\section{The strong stationarity conditions} \label{sec3}
	The goal of this section is to discuss the strong stationarity conditions for the 
	model Problem~\eqref{problem1}. The content of this section is known in the literature
	especially for the case $g \equiv 0$ (see \cite{Mignot:1984:OCPVI,Mignot:1976:CEVI}).
	We first recall the notion of the differentiability in Hilbert spaces \cite{HA:2018:PDEOCP}.
\begin{definition}
	Let $X$ and $Y$ be two Banach spaces and $\cL:=\cL(X,Y)$ denotes the space of all bounded linear operators from $X$ to $Y$. Let $X'$ be an open subset of $X$, then the map $J: X' \rightarrow Y$ is said to be G\^ateaux differentiable at $z \in X'$ if it is directionally differentiable in all directions $t \in X$ and $J'(z,t)=J'(z) t$ where $J'(z) \in \cL(X,Y)$.
\end{definition}
\begin{remark} \label{rem:1}
	The solution map $S: L^2(\Omega) \rightarrow \cK \subseteq \cV$ 
	which is defined as $S(z):=u \in \cK$ is Lipschitz continuous (see Lemma 2.1 in \cite{Kunisch:2012:SOPS}) but in general is not G\^ateaux-differentiable (see Lemma 2.4 in \cite{Kunisch:2012:SOPS}). Whereas, $S$ is directionally differentiable at all $z \in L^2(\Omega)$. Using Theorem 3.3 of article \cite{Mignot:1976:CEVI}, the directional derivative given by $DS(z,t) \in \cM_{u}$ in the direction $t \in \cV^*$ satisfies the following variational inequality
	\begin{align} \label{1.4}
	a(DS(z,t),v-DS(z,t)) \geq \langle t, v -DS(z,t) \rangle_{-1,1} \quad \forall~ v \in \cM_{u},
	\end{align}  
	where $\cM_{u} := \big\{v \in \cV~:~ v(x) \geq 0~\text{ a.e. on } \{u-g=0\} \text{ and } \langle \zeta, v \rangle_{-1,1} =0\big \}$. 
\end{remark}	
	
	Using \eqref{eq:C4} and \eqref{eq:C5}, Problem \eqref{problem1} can be rewritten as the mathematical program with complementarity constraints (MPCCs) in function spaces which is a particular case of the mathematical program with equilibrium constraints (MPECs) \cite{Mignot:1984:OCPVI}. For notation simplicity, we will focus on $\widehat{\Omega} = \Omega$, with almost no changes to the text for $\widehat\Omega = \Omega_0$.
	\begin{problem}
		\begin{subequations}
			\begin{align}
		\min_{z \in L^2(\Omega)} \left\{ F(z):= \frac{1}{2} \|S(z)-u_d\|^2_{L^2(\Omega)} + \frac{\nu}{2} \|z\|^2_{L^2(\Omega)} \right\}, \label{eq:C1} \\  \text{ subject to } \quad a(u,v) = (z,v) + \langle \zeta , v \rangle_{-1,1} \quad \forall ~v \in \cV, \label{eq:C2} \\
		\langle \zeta, u- g \rangle_{-1,1} =0 \, , ~\zeta \geq 0 ~\text{and } u(x) - g(x)\geq 0~~\text{ a.e. in } \Omega.  \label{eq:C3}
		\end{align}
		\end{subequations}
		\end{problem}
	Existence of solution to the above optimization problem follows by standard arguments. However, in general we cannot expect $\tilde{z}$ to be unique.
	
The strong stationarity conditions have been stated in the next result. See \cite[Theorem 2.2]{Mignot:1984:OCPVI} for 
the proof when $g=0$ and for $g\neq 0$, the proof follows using similar arguments as in \cite{Mignot:1976:CEVI,Mignot:1984:OCPVI} provided optimal state satisfies $\tilde{u} \in H^2(\Omega)$ 
(via Theorem \ref{asp1}(b)) or $g|_{\partial\Omega} = 0$.
	\begin{theorem}[Strong stationarity conditions]
\label{thm:main1}
	Let $\tilde{u} \in H^2(\Omega)$ or $\tilde{u} \in \cK$ with $g|_{\partial \Omega}=0$ and let $(\tilde{z}, \zeta) \in L^2(\Omega) \times \cV^*$ satisfy the equations \eqref{eq:C1}--\eqref{eq:C3}. Then, there exists a slack variable $\lambda \in \cV^*$, an adjoint variable $p \in H^1_0(\Omega)$ such that $(\tilde{u}, \tilde{z}, p, \lambda)$ solves the following strong stationarity conditions:
	\begin{subequations}\label{eq:strong_stat}
	\begin{align}
	a(\tilde{u},v) &= \int_{\Omega} \tilde{z}~ v~ dx + \langle \zeta , v \rangle_{-1,1}~~~\forall ~v \in \cV, \label{1.5a} \\ \langle \zeta, \tilde{u}- g \rangle_{-1,1} &=0 \, ,~\zeta \geq 0 ~\text{and } \tilde{u}(x) - g(x)\geq 0~~\text{ a.e. in } \Omega, \label{1.5b} \\ a (v, p) &= \int_{\Omega} (\tilde{u} - u_d) v~dx + \langle \lambda , v\rangle_{-1,1}~~~\forall ~v \in \cV,  \label{1.5c}\\  \langle \lambda, v \rangle_{-1,1} &\leq 0 ~~\forall~ v \in \cM_{\tilde{u}} ~ \text{ and }~ p \in \cM_{\tilde{u}},  \label{1.5d} \\ p(x) &+ \nu \tilde{z}(x) = 0 ~\text{ a.e. in } \Omega , \label{1.5e}
	\end{align}
	\end{subequations}
where $\cM_{u} := \big\{v \in \cV~:~ v(x) \geq 0~\text{ a.e. on } \{u-g=0\} \text{ and } \langle \zeta, v \rangle_{-1,1} =0\big \}$.	
\end{theorem}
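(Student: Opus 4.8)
The plan is to recast Problem~\eqref{eq:C1}--\eqref{eq:C3} as the unconstrained minimization of the reduced functional $F(z)=\tfrac{1}{2}\|S(z)-u_d\|_{L^2(\Omega)}^2+\tfrac{\nu}{2}\|z\|_{L^2(\Omega)}^2$ over $z\in L^2(\Omega)$, and to exploit the conical (directional) differentiability of $S$ recorded in Remark~\ref{rem:1}. Since $S$ is Lipschitz and directionally differentiable and $w\mapsto\tfrac{1}{2}\|w-u_d\|^2$ is smooth, $F$ is directionally differentiable, and the chain rule gives, for every direction $h\in L^2(\Omega)$,
\begin{equation}\label{plan:dd}
F'(\tilde{z};h)=(\tilde{u}-u_d,\eta_h)+\nu(\tilde{z},h),\qquad \eta_h:=DS(\tilde{z},h),
\end{equation}
where $\eta_h\in\cM_{\tilde{u}}$ solves the variational inequality~\eqref{1.4}. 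As $\tilde{z}$ is a local minimizer, the first-order condition $F'(\tilde{z};h)\ge 0$ must hold for all $h\in L^2(\Omega)$; this single inequality is the object from which the entire system~\eqref{eq:strong_stat} is extracted. Taking $v=0$ and $v=2\eta_h$ in~\eqref{1.4} and using that $\cM_{\tilde{u}}$ is a cone, I first record the linearized complementarity relations $\langle\cA\eta_h-h,v\rangle_{-1,1}\ge 0$ for all $v\in\cM_{\tilde{u}}$ and $\langle\cA\eta_h-h,\eta_h\rangle_{-1,1}=0$.

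Next I would introduce an adjoint state $p\in\cM_{\tilde{u}}\subset\cV=H^1_0(\Omega)$, obtained by solving an adjoint variational inequality posed on the cone $\cM_{\tilde{u}}$ so as to mirror the adjoint equation of the unconstrained problem, together with a slack variable $\lambda\in\cV^*$ defined by
\begin{equation}\label{plan:lambda}
\langle\lambda,v\rangle_{-1,1}:=a(v,p)-(\tilde{u}-u_d,v)\qquad\forall\,v\in\cV.
\end{equation}
Then the adjoint equation~\eqref{1.5c} holds by construction, $p\in\cM_{\tilde{u}}$ furnishes the membership requirement in~\eqref{1.5d}, and the cone structure yields the complementarity $\langle\lambda,p\rangle_{-1,1}=0$. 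The substantive content still to be proved is the sign $\langle\lambda,v\rangle_{-1,1}\le 0$ for all $v\in\cM_{\tilde{u}}$ in~\eqref{1.5d} and the gradient equation~\eqref{1.5e}.

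The heart of the argument is to turn the directional inequality $F'(\tilde{z};h)\ge 0$ into the pointwise identity~\eqref{1.5e}. Testing~\eqref{plan:lambda} with $\eta_h\in\cM_{\tilde{u}}$, using the symmetry of $a(\cdot,\cdot)$ to write $a(\eta_h,p)=\langle\cA\eta_h-h,p\rangle_{-1,1}+(h,p)$, and substituting into~\eqref{plan:dd}, one obtains
\begin{equation}\label{plan:grad}
(p+\nu\tilde{z},h)\ \ge\ -\langle\cA\eta_h-h,p\rangle_{-1,1}+\langle\lambda,\eta_h\rangle_{-1,1}.
\end{equation}
The task then reduces to showing that the right-hand side of~\eqref{plan:grad} vanishes for every $h$, after which replacing $h$ by $-h$ forces $(p+\nu\tilde{z},h)=0$ for all $h\in L^2(\Omega)$, i.e.\ $p+\nu\tilde{z}=0$ a.e., which is~\eqref{1.5e}; this in turn upgrades the regularity of $\tilde{z}$ to that of $p\in H^1_0(\Omega)$.

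The main obstacle is precisely the vanishing of the right-hand side of~\eqref{plan:grad}, that is, the passage from a weak (C-type) stationarity to the strong-stationarity signs $\langle\cA\eta_h-h,p\rangle_{-1,1}=0$ and $\langle\lambda,\eta_h\rangle_{-1,1}=0$. Each term has a definite but opposite sign from cone membership ($\cA\eta_h-h$ lies in the dual cone of $\cM_{\tilde{u}}$ while $p\in\cM_{\tilde{u}}$, whereas $\langle\lambda,\cdot\rangle_{-1,1}\le 0$ on $\cM_{\tilde{u}}$ while $\eta_h\in\cM_{\tilde{u}}$), so that merely testing the two inequalities against each other is not enough; showing the terms are individually zero requires the fine structure of the critical cone on the biactive set $\{\tilde{u}=g\}\cap\{\zeta=0\}$ and is exactly the step that makes Mignot's conical differentiability indispensable. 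This is also where the hypotheses enter: assuming $\tilde{u}\in H^2(\Omega)$ (via Theorem~\ref{asp1}(b)) gives $\zeta=\cA\tilde{u}-\tilde{z}\in L^2(\Omega)$, so that the complementarity~\eqref{1.5b} and the conditions defining $\cM_{\tilde{u}}$ acquire a pointwise a.e.\ meaning and the duality pairings above are well defined, while the alternative hypothesis $g|_{\partial\Omega}=0$ plays the same role by permitting the zero-obstacle reduction of~\cite{Mignot:1976:CEVI,Mignot:1984:OCPVI}. Once the cross terms are shown to vanish, \eqref{1.5a}--\eqref{1.5b} hold by assumption, \eqref{1.5c}--\eqref{1.5d} by the construction of $p$ and $\lambda$, and \eqref{1.5e} by~\eqref{plan:grad}, completing the proof.
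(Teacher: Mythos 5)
Your setup is the classical Mignot--Puel framework, which is indeed what stands behind this theorem (the paper itself offers no proof: it cites \cite[Theorem~2.2]{Mignot:1984:OCPVI} for $g=0$ and asserts that $g\neq 0$ follows by the same arguments under the hypotheses $\tilde u\in H^2(\Omega)$ or $g|_{\partial\Omega}=0$). The reduced functional, the use of the conical derivative $\eta_h:=DS(\tilde z,h)$ solving \eqref{1.4}, the first-order condition $F'(\tilde z;h)\ge 0$, the definition of $\lambda$ via the adjoint identity, and your reading of where the hypotheses enter (with $\tilde u\in H^2(\Omega)$ one gets $\zeta=-\Delta\tilde u-\tilde z\in L^2(\Omega)$, so the biactive set and $\cM_{\tilde u}$ have pointwise meaning; $g|_{\partial\Omega}=0$ permits the zero-obstacle translation) are all correct. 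The problem is the step to which you reduce everything, which is not merely left unproven but is false as stated. Your final inequality reads $(p+\nu\tilde z,h)_{L^2(\Omega)}\ge \langle\lambda,\eta_h\rangle_{-1,1}-\langle\cA\eta_h-h,p\rangle_{-1,1}$, and by the very computation that produced it the two sides differ exactly by $F'(\tilde z;h)$; hence, once \eqref{1.5e} holds, the right-hand side equals $-F'(\tilde z;h)$. Claiming it "vanishes for every $h$" is therefore claiming $F'(\tilde z;\cdot)\equiv 0$, i.e.\ that $F$ is G\^ateaux-stationary at $\tilde z$. That is generally wrong when the biactive set is nontrivial: for any $h$ in the $L^2$-polar cone of $\cM_{\tilde u}$ (i.e.\ $(h,v)\le 0$ for all $v\in\cM_{\tilde u}$) one has $\eta_h=0$, so $F'(\tilde z;h)=\nu(\tilde z,h)=-(p,h)_{L^2(\Omega)}$, which is strictly positive for suitable such $h$ whenever $p$ does not vanish on the biactive set; one-dimensional MPEC examples realize this at genuine local minima. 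Strong stationarity does not assert that both cross terms vanish for all $h$. The actual proof extracts \eqref{1.5d}--\eqref{1.5e} by testing $F'(\tilde z;h)\ge 0$ on special families of directions for which one cross term vanishes by construction --- $h=\cA w$ with $w$ ranging over a dense subset of $\cM_{\tilde u}$ (so that $\eta_h=w$ and $\cA\eta_h-h=0$), and $h$ in the polar cone (so that $\eta_h=0$) --- and uses the polyhedricity of $\cK$ at $\tilde u$ together with the Moreau decomposition of $\cA^{-1}h$ in the $a$-inner product to see that these directions suffice. None of this machinery, which is the entire content of \cite[Theorem~2.2]{Mignot:1984:OCPVI}, appears in your proposal; the sentence "once the cross terms are shown to vanish \ldots completing the proof" assumes what must be proved, and in the literal form stated assumes something untrue.

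Two secondary issues. First, you never write down the adjoint variational inequality defining $p$, and the choice matters for signs: the natural one, $p\in\cM_{\tilde u}$ with $a(v-p,p)\ge(\tilde u-u_d,v-p)$ for all $v\in\cM_{\tilde u}$, yields through your definition of $\lambda$ the bound $\langle\lambda,v\rangle_{-1,1}\ge 0$ on $\cM_{\tilde u}$ --- the opposite of \eqref{1.5d}; obtaining the correct sign requires the reversed inequality, whose solvability (Lions--Stampacchia on $-\cM_{\tilde u}$ after $p\mapsto -p$) should at least be stated. Second, the construction is circular in spirit: you posit $p$ by an auxiliary problem and then try to prove $p=-\nu\tilde z$, whereas in the classical argument $p$ and $\lambda$ emerge directly from the tested optimality conditions so that \eqref{1.5c}, \eqref{1.5d}\creflastconjunction\eqref{1.5e} are obtained simultaneously. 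In short: the scaffolding matches the known proof, but the load-bearing step is missing and the proposed route to it would fail.
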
	

From \eqref{1.5e}, using the regularity of adjoint $p$, we immediately have that 
$\tilde{z} \in H^1_0(\Omega)$. The embedding of $H^1(\Omega)$ in $L^p(\Omega)$
is standard \cite{s1989topics}. 
	\begin{proposition}\label{prop:ctrl_reg}
		Under the assumptions of Theorem~\ref{thm:main1}, optimal control 
		$\tilde{z} \in L^2(\Omega)$ corresponding to Problem \eqref{problem1} fulfills 
		\[ 
		\tilde{z} \in 	H^1(\Omega) \hookrightarrow
		\begin{cases}
		L^6(\Omega) \quad \text{if}~ d =3,  \\  L^p(\Omega)\quad \text{if}~ d =2,
		\quad 		\mbox{where } 1 \leq p < \infty.
		\end{cases} 
		\] 
	\end{proposition}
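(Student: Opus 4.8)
The plan is to read the regularity of $\tilde{z}$ directly off the last strong stationarity condition and then invoke the standard Sobolev embedding theorem; the entire argument hinges on the pointwise relation \eqref{1.5e}, which couples the control to the adjoint state.

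First I would rewrite \eqref{1.5e} as $\tilde{z}(x) = -\frac{1}{\nu}\,p(x)$ a.e.\ in $\Omega$, which is legitimate since $\nu>0$. This identity transfers any regularity statement about $p$ verbatim to $\tilde{z}$, up to the multiplicative constant $-\frac{1}{\nu}$. Second, I would invoke the conclusion of Theorem~\ref{thm:main1}, which already furnishes the adjoint variable $p \in H^1_0(\Omega)$. Combining the two steps yields at once $\tilde{z} = -\frac{1}{\nu}\,p \in H^1_0(\Omega) \hookrightarrow H^1(\Omega)$, establishing the first claimed membership.

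Third I would apply the Sobolev embedding theorem to $H^1(\Omega)$ on the bounded Lipschitz domain $\Omega$. For $d=3$ the critical Sobolev exponent is $2^{*} = \frac{2d}{d-2} = 6$, giving the continuous embedding $H^1(\Omega) \hookrightarrow L^6(\Omega)$. For $d=2$ we are in the borderline case, where $H^1(\Omega)$ embeds continuously into $L^p(\Omega)$ for every finite $p \in [1,\infty)$. Chaining these embeddings with the membership $\tilde{z}\in H^1(\Omega)$ obtained above produces exactly the asserted inclusions, which completes the argument.

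The point I want to stress is that, at the level of this proposition, there is essentially no obstacle: all the difficulty was already absorbed into establishing $p \in H^1_0(\Omega)$ within the strong stationarity result, which this proposition takes as a hypothesis. The only items requiring genuine care are the correct identification of the Sobolev exponents in the two spatial dimensions and the fact that $\Omega$ is a bounded Lipschitz domain so that the embedding theorem is applicable; the latter is guaranteed by the standing assumptions on $\Omega$ fixed in Section~\ref{sec:Intro}.
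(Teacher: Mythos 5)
Your proof is correct and follows exactly the paper's own argument: the paper likewise reads $\tilde{z} = -\frac{1}{\nu}p \in H^1_0(\Omega)$ off the stationarity condition \eqref{1.5e} together with the regularity $p \in H^1_0(\Omega)$ furnished by Theorem~\ref{thm:main1}, and then cites the standard Sobolev embedding of $H^1(\Omega)$ into $L^6(\Omega)$ for $d=3$ and into $L^p(\Omega)$, $1 \leq p < \infty$, for $d=2$. Your identification of the critical exponent and the role of the bounded Lipschitz domain matches the paper's (implicit) reasoning, so there is nothing to add.
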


The optimal control $\tilde{z} \in L^2(\Omega)$ of \eqref{eq:C1} also satisfies the {\it quadratic growth condition} which is stated next and used in the subsequent analysis. For its proof, see \cite[Thm.~2.12]{Kunisch:2012:SOPS}.

\begin{proposition} \label{grpp}
	Let $F(\cdot)$ be the functional defined in \eqref{eq:C1}, then there exists constants
	$\gamma>0$ and $\kappa >0$ such that
	\begin{align} \label{grpp1}
	F(\tilde{z}) \leq F(z) - \kappa \|z-\tilde{z}\|_{L^2(\Omega)}^2 \quad \forall~ z \in B_{\gamma}(\tilde{z}),
	\end{align}
	where $B_{\gamma}(\tilde{z})$ is the ball of radius $\gamma$ with center $\tilde{z}$ in the topology of $L^2(\Omega)$.
\end{proposition}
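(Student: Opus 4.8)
The plan is to bound the increment $F(z)-F(\tilde z)$ from below by a positive multiple of $\|z-\tilde z\|_{L^2(\Omega)}^2$ for $z$ in a small $L^2$-ball about $\tilde z$, by using the strong stationarity system \eqref{eq:strong_stat} to cancel everything that is \emph{linear} in the increment, so that only a manifestly quadratic part together with two complementarity pairings survive, and then showing those pairings cannot destroy the quadratic lower bound. Write $u:=S(z)$, $\tilde u:=S(\tilde z)$, and let $\zeta_z\in\cV^*$ be the multiplier associated with $u$ through \eqref{eq:C2}--\eqref{eq:C3} (so that $\zeta_{\tilde z}=\zeta$). Since both terms of $F$ are quadratic, a direct expansion gives
\[
F(z)-F(\tilde z)=\tfrac12\|u-\tilde u\|_{L^2(\Omega)}^2+\tfrac{\nu}{2}\|z-\tilde z\|_{L^2(\Omega)}^2+(\tilde u-u_d,\,u-\tilde u)+\nu(\tilde z,\,z-\tilde z).
\]

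First I would eliminate the two inner products. Testing the adjoint equation \eqref{1.5c} with $v=u-\tilde u\in\cV$ yields $(\tilde u-u_d,u-\tilde u)=a(u-\tilde u,p)-\langle\lambda,u-\tilde u\rangle_{-1,1}$; subtracting the state equation \eqref{1.5a} for $\tilde u$ from its analogue for $u$ and testing with $p$ gives $a(u-\tilde u,p)=(z-\tilde z,p)+\langle\zeta_z-\zeta,p\rangle_{-1,1}$; and the gradient identity \eqref{1.5e}, $p=-\nu\tilde z$, turns $\nu(\tilde z,z-\tilde z)$ into $-(p,z-\tilde z)$. The two copies of $(p,z-\tilde z)$ cancel, leaving the exact identity
\[
F(z)-F(\tilde z)=\tfrac12\|u-\tilde u\|_{L^2(\Omega)}^2+\tfrac{\nu}{2}\|z-\tilde z\|_{L^2(\Omega)}^2+\langle\zeta_z-\zeta,p\rangle_{-1,1}-\langle\lambda,u-\tilde u\rangle_{-1,1}.
\]

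Next I would control the two complementarity terms. Since $p\in\cM_{\tilde u}$ we have $\langle\zeta,p\rangle_{-1,1}=0$, so the first reduces to $\langle\zeta_z,p\rangle_{-1,1}$. For the $\lambda$-term, using that the slack is concentrated on the active set $\{\tilde u=g\}$ with the correct sign (the refinement of \eqref{1.5d}) and that $u-\tilde u\ge 0$ a.e.\ on $\{\tilde u=g\}$, the contribution $-\langle\lambda,u-\tilde u\rangle_{-1,1}$ is nonnegative. For $\langle\zeta_z,p\rangle_{-1,1}$ I would split the contact set of $u$: on the part already contained in $\{\tilde u=g\}$ both $\zeta_z\ge0$ and $p\ge0$, giving a nonnegative contribution; the trouble is confined to the ``newly active'' region $\{u=g\}\cap\{\tilde u>g\}$, where $\zeta_z\ge0$ but the sign of $p$ is not controlled (on the inactive set the adjoint solves $-\Delta p=\tilde u-u_d$, whose right-hand side has no definite sign).

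The hard part will be precisely this newly-active region. It is empty at $z=\tilde z$, and by the Lipschitz continuity of $S$ (Remark~\ref{rem:1}) and the convergence $u\to\tilde u$ as $z\to\tilde z$, both its measure and the mass of $\zeta_z$ it carries shrink; restricting to a ball $B_{\gamma}(\tilde z)$ of small radius should make the possibly-negative part of $\langle\zeta_z,p\rangle_{-1,1}$ small relative to $\tfrac{\nu}{2}\|z-\tilde z\|_{L^2(\Omega)}^2$, after which discarding the nonnegative term $\tfrac12\|u-\tilde u\|_{L^2(\Omega)}^2$ gives \eqref{grpp1} with some $0<\kappa\le\nu/2$. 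I expect the quantitative step — estimating the displacement of the free boundary, and hence the bad contribution, as $O(\|z-\tilde z\|_{L^2(\Omega)}^2)$ with a small constant — to be the delicate point, and the very reason the growth estimate is only local rather than global. (Equivalently, one may phrase this as verifying a second-order sufficient optimality condition, but the analytic core is the same.)
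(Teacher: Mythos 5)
Your algebra is sound: the exact identity
$F(z)-F(\tilde z)=\tfrac12\|u-\tilde u\|_{L^2(\Omega)}^2+\tfrac{\nu}{2}\|z-\tilde z\|_{L^2(\Omega)}^2+\langle\zeta_z-\zeta,p\rangle_{-1,1}-\langle\lambda,u-\tilde u\rangle_{-1,1}$
follows correctly from \eqref{1.5a}, \eqref{1.5c}, \eqref{1.5e} and \eqref{eq:C4}--\eqref{eq:C5}, and $\langle\zeta,p\rangle_{-1,1}=0$ is right since $p\in\cM_{\tilde u}$. (Note the paper itself does not prove Proposition~\ref{grpp}; it quotes it from the reference [Kunisch/Wachsmuth, Thm.~2.12], so your argument has to stand on its own.) The first genuine gap is the sign claim $-\langle\lambda,u-\tilde u\rangle_{-1,1}\ge 0$. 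Condition \eqref{1.5d} controls $\langle\lambda,v\rangle_{-1,1}$ only for $v\in\cM_{\tilde u}$, and $u-\tilde u$ is in general \emph{not} in $\cM_{\tilde u}$: it satisfies $u-\tilde u\ge 0$ a.e.\ on $\{\tilde u=g\}$, but $\langle\zeta,u-\tilde u\rangle_{-1,1}=\langle\zeta,u-g\rangle_{-1,1}$ can be strictly positive, since $u$ may lift off the obstacle on the strictly active set. The ``refinement'' you invoke --- $\lambda$ concentrated on $\{\tilde u=g\}$ with $\lambda\le 0$ there --- is only deducible on $\{\tilde u=g\}\setminus\operatorname{supp}\zeta$, because test functions in $\cM_{\tilde u}$ must vanish $\zeta$-a.e.; on $\operatorname{supp}\zeta$ strong stationarity imposes no sign on $\lambda$, and that is exactly where $u-\tilde u$ can be positive. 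So this term is not nonnegative in general.

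The second, and fatal, gap is the one you flag yourself: the newly active contribution to $\langle\zeta_z,p\rangle_{-1,1}$. The mechanism you propose cannot deliver the bound you need, which is $|\langle\zeta_z,p\rangle_{N_z}|\le\varepsilon\|z-\tilde z\|_{L^2(\Omega)}^2$ \emph{uniformly} for all $z$ in a fixed ball $B_\gamma(\tilde z)$, i.e.\ a quantity vanishing at a quadratic rate as $z\to\tilde z$. Continuity arguments give at best a bound of the form $\bigl(\text{mass of }\zeta_z\text{ on }N_z\bigr)\cdot\sup_{N_z}p^-$ with $N_z\subset\{0<\tilde u-g\le C\|z-\tilde z\|\}$ (and even this inclusion needs an $L^\infty$-Lipschitz estimate for $S$ that is nowhere stated); both factors are mere moduli of continuity tending to $0$ with \emph{no rate}, since $p^-$ is only continuous and vanishes on $\{\tilde u=g\}$, and the level sets $\{0<\tilde u-g\le s\}$ can shrink arbitrarily slowly (degenerate free boundaries, nearly flat regions of $\tilde u-g$ at small height). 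The product is in general not even $O(\|z-\tilde z\|)$, let alone $o(\|z-\tilde z\|^2)$, so it cannot be absorbed by $\tfrac{\nu}{2}\|z-\tilde z\|^2$. This is precisely the second-order content of the proposition: the curvature pairings $\langle\zeta_z,p\rangle_{-1,1}$ and $\langle\lambda,u-\tilde u\rangle_{-1,1}$ have to be treated jointly by a genuine second-order/limiting argument exploiting B-stationarity and the Tikhonov term (which is what the cited reference does), not by pointwise signs plus shrinkage of the contact set. As it stands, your proposal establishes the identity but not the inequality \eqref{grpp1}.
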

\section{Discretization} \label{sec4}
\noindent
Let $\cT_h$ be a regular triangulation (mesh) of $\Omega$ and $K \in \cT_h$ denotes a non-degenerate element (triangle) for $d=2$ or tetrahedron for $d=3$.
For the discretization of Problem \eqref{problem1}, we use a discontinuous Galerkin finite element space given by
\begin{align} \label{dgspc}
\cV_h:= \{v_h \in L^2(\Omega)~:~v_h|_K \in \cP_1(K)~ \forall~ K \in \cT_h\},
\end{align} where $w|_K$ denotes the restriction of $w$ to $K$ and $\cP_1(K)$ is the space of polynomials of degree less than or equal to one defined on $K$. Let $\{z_i~:~ i=1,2, \dots, M_h\}$ denotes the set of all vertices (corner points) of the mesh, where $M_h$ is the total number of vertices and the corresponding macroelements (patch) will be denoted by $\omega_i=\cup_{z_i \in K} K$. The discrete function $v_h \in \cV_h$ can be expressed as the linear combination of the Lagrange basis functions $\phi^K_i$ where $(i,K) \in \{1,2,\cdots, M_h\} \times \cT_h$. Note that $\phi^{K}_i(z_j)=\delta_{ij}$, where $\delta_{ij}$ is the Kronker's delta function and $\phi^K_i(x)=0~\forall x \in \Omega \setminus K$. Therefore, any $v_h \in \cV_h$ can have the following form:
\begin{align} \label{disfuc}
v_h=\sum_{i=1}^{M_h} \sum\limits_{ K \subset \omega_i} v_{h,i}^K \phi^K_i.
\end{align} 
Let $\cV_K$ be the set of all nodes of element $K \in \cT_h$. Further, $\mathcal{E}_h$ is the set of all the edges/faces of $\cT_h$ and $h_e$ is the length of an edge/face $e$. Let us introduce the discrete nonempty, closed and convex subset $\cK_h$ as follows:
\begin{align}
\cK_h:= \{w_h \in \cV_h~:~w_h|_K(p) \geq g_h(p) ~ \forall~ p \in \cV_K~\text{and } \forall~ K \in \cT_h\},
\end{align}where $g_h \in \cV_c:= \cV_h \cap \cV$ denotes the nodal interpolation of $g$ \cite{BScott:2008:FEM}. Before introducing the discrete variational inequality, we need to introduce the average and jump of the discrete functions. Firstly, the broken Sobolev space is defined by
\begin{align*}
H^1(\Omega, \cT_h)=\{v \in L^2(\Omega)~:~v|_K \in H^1(K) \quad \forall~K \in \cT_h\}.
\end{align*}
Let $e$ be an interior edge/face in $\cT_h$, then there exist two elements $K^+$ and $K^-$ such that $e = \partial K^+ \cap \partial K^-$. Let $n^+$ be an unit outward normal pointing from $K^+$ to $K^-$, then we have $n^-=-n^+$. Hence, the average and jump of $v \in H^1(\Omega, \cT_h)$ on an edge/face $e$ is defined by
\begin{align*}
\smean{v}= \frac{v^++v^-}{2} \quad \text{and} \quad  \sjump{v}= v^+n^+ + v^-n^-,
\end{align*} respectively where $v^+=v|_{K^+}$ and $v^-=v|_{K^-}$. Similarly, we define the average and jump for a vector valued function $q \in [H^1(\Omega, \cT_h)]^d$ on interior edge/face $e$ as
\begin{align*}
\smean{q}= \frac{q^++q^-}{2}\quad \text{and} \quad \sjump{q}= q^+\cdot n^+ + q^-\cdot n^-.
\end{align*} 
Let $\cE^b_h$ be the set of all boundary edges of $\cT_h$, then for a boundary edge $e \in \cE^b_h$ let $n^e$ be an outward unit normal to an element $K$ such that $\partial K \cap \partial \Omega =e$, we define for $v \in H^1(\Omega, \cT_h) $
\begin{align*}
\sjump{v}=v n^e \quad \text{and} \quad \smean{v} = v
\end{align*} and for $q \in [H^1(\Omega, \cT_h)]^d$, we set
\begin{align*}
\sjump{q}=q \cdot n^e \quad \text{and} \quad \smean{q}= q.
\end{align*}
Next, let us define the following two bilinear forms
\begin{equation*}
a_h(v_h,w_h):= (\nabla_h v_h, \nabla_h w_h) = \sum_{K \in \cT_h}\int_K \nabla v_h \cdot \nabla w_h~ dx,
\end{equation*}
and for all $w , v \in \cV_h$
\begin{align}
b_h(w,v) := - \sum_{e \in \cE_h} \int_e \smean{w} \sjump{v}~ ds  & - \sum_{e \in \cE_h} \int_e \smean{\nabla v} \sjump{w}~ ds + \sum_{e \in \cE_h} \int_e \frac{\eta}{h_e} \sjump{w}\sjump{v}~ ds,
\end{align}
where $\eta \geq \eta_0 >0$ is sufficiently large positive number to ensure the ellipticity of $\mathcal{A}^{SIP}(\cdot,\cdot)$ which is defined by
\begin{equation} \label{DP12}
\mathcal{A}^{SIP}(v_h,w_h)=a_h(v_h,w_h) + b_h(v_h,w_h) ~\forall v_h,w_h \in \cV_h.
\end{equation} 
The discrete version of the obstacle problem in Problem \eqref{problem1} is given by: 
Find $u_h \in \cK_h$ fulfilling
\begin{align} \label{disobs}
\xA^{SIP}(u_h, v_h-u_h) \geq (z, v_h-u_h)~ \forall~  v_h \in \cK_h  ,
\end{align}
where $\mathcal{A}^{SIP}(\cdot,\cdot): \cV_h \times \cV_h \rightarrow \mathbb{R}$ is the SIPG (Symmetric Interior Penalty Galerkin) bilinear form \cite[Chapter 10]{BScott:2008:FEM}. Using the coercivity and boundedness of the discrete bilinear form $\xA^{SIP}(\cdot,\cdot)$ \cite{WHC:2010:DGOP}, it can be verified that the discrete problem \eqref{disobs} has a unique solution $u_h \in \cK_h$.

Next, we introduce the discrete solution map corresponding to $u_h \in \cK_h \subseteq \cV_h$ as follows:
\begin{align*} 
S_h : L^2(\Omega) \rightarrow \cV_h \quad, \quad z \mapsto S_h(z)=: u_h. 
\end{align*}
The discrete version of the optimization problem \eqref{eq:C1} using $S_h$ is stated next:
\begin{align} 
\min_{z \in L^2(\Omega)} \bigg\{ F_h (z)&:= \frac{1}{2} \|S_h(z)-u_d\|^2_{L^2(\Omega)} + \frac{\nu}{2} \|z\|^2_{L^2(\Omega)} \label{disocp} \\ 	&=  \frac{1}{2} \sum_{K \in \cT_h} \|S^K_h(z)-u_d\|^2_{L^2(K)} + \frac{\nu}{2} \|z\|^2_{L^2(\Omega)} \bigg\}. \notag 
\end{align}
As in the continuous case, the standard arguments yield existence of local optimal solution to  \eqref{disocp}. In \eqref{disocp}, the control $z \in L^2(\Omega)$ is not discretized. However, we will see in Remark \ref{key1} that each local optimum $z$ of \eqref{disocp} belongs to $\cV_h$ (defined in \eqref{dgspc}) and there is no need to discretize the control variable. 
To begin, we introduce the space 
\begin{align*}
\cW:=\cV_h + (\cV \cap H^2(\Omega)),
\end{align*} where $ A + B := \{a + b~:~a \in A~\text{and }~ b \in B \}$ and the norm $\vertiii{\cdot}$ for $w \in \cW$ is defined by
\begin{align}
\vertiii{w}^2:= \sum\limits_{ K \in \cT_h} \|\nabla w\|^2_{L^2(K)} + \sum\limits_{ e \in \cE_h} \frac{1}{h_e}\|\sjump{w}\|^2_{L^2(e)} + \sum\limits_{ K \in \cT_h} h_K^2\|D^2w\|^2_{L^2(K)} ,
\end{align}
where $h_K:=$ diameter of $K$. It is easy to see that $\vertiii{w}$ is a norm on $\cW$ with the help of next lemma. We deduce that the $\| \cdot\|_{L^2(\Omega)} $ is bounded by $\vertiii{\cdot}$ through the next result. For more details, see \cite[Lemma 2.1]{arnold1982interior}. We refer to Appendix \ref{sec7} for a proof.
\begin{lemma} \label{normeq}
	It holds that
	\begin{align}
	\|w\|_{L^2(\Omega)} \leq C_1 \bigg(\sum\limits_{ K \in \cT_h} \|\nabla w\|^2_{L^2(K)} + \sum\limits_{ e \in \cE_h} \frac{1}{h_e}\|\sjump{w}\|^2_{L^2(e)} \bigg)^{\frac{1}{2}} \quad \forall~ w \in H^1(\Omega, \cT_h),
	\end{align}
	where $C_1$ is a positive constant. 
\end{lemma}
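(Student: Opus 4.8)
The plan is to prove this discrete Poincaré--Friedrichs inequality by a duality argument, exactly in the spirit of \cite[Lemma 2.1]{arnold1982interior}. Fix $w \in H^1(\Omega,\cT_h)$ and let $\psi \in H^1_0(\Omega)$ solve the auxiliary Poisson problem $-\Delta \psi = w$ in $\Omega$, $\psi = 0$ on $\partial\Omega$, in the weak sense. Since $\Omega$ is convex (or $\partial\Omega$ is $C^{1,1}$), elliptic regularity yields $\psi \in H^2(\Omega)\cap H^1_0(\Omega)$ together with the a priori bound $\|\psi\|_{H^2(\Omega)} \le C \|w\|_{L^2(\Omega)}$, with $C$ depending only on $\Omega$. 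The identity $\|w\|_{L^2(\Omega)}^2 = \sum_{K\in\cT_h}\int_K w\,(-\Delta\psi)\,dx$ serves as the starting point, and the goal is to bound the right-hand side by the broken energy seminorm times $\|w\|_{L^2(\Omega)}$.

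Next I would integrate by parts on each element $K$, giving $\int_K w(-\Delta\psi)\,dx = \int_K \nabla w\cdot\nabla\psi\,dx - \int_{\partial K} w\,(\nabla\psi\cdot n_K)\,ds$. Summing over $K\in\cT_h$ and invoking the usual DG summation-by-parts identity, the element-boundary contributions collapse onto the skeleton $\cE_h$. The crucial point is that $\psi\in H^2(\Omega)$ makes $\nabla\psi$ single-valued across every interior edge/face, so $\sjump{\nabla\psi}=0$ and only the jumps of $w$ survive; together with $\psi=0$ on $\partial\Omega$ this gives
\begin{align*}
\|w\|_{L^2(\Omega)}^2 = \sum_{K\in\cT_h}\int_K \nabla w\cdot\nabla\psi\,dx - \sum_{e\in\cE_h}\int_e \smean{\nabla\psi}\cdot\sjump{w}\,ds .
\end{align*}

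It then remains to estimate the two terms. For the volume term, Cauchy--Schwarz gives $\sum_K\int_K\nabla w\cdot\nabla\psi\,dx \le \big(\sum_K\|\nabla w\|^2_{L^2(K)}\big)^{1/2}\|\psi\|_{H^1(\Omega)}$. For the skeleton term I would insert the scaling factors $h_e^{\pm 1/2}$ and apply Cauchy--Schwarz edgewise to obtain $\big(\sum_e h_e^{-1}\|\sjump{w}\|^2_{L^2(e)}\big)^{1/2}\big(\sum_e h_e\|\smean{\nabla\psi}\|^2_{L^2(e)}\big)^{1/2}$. The second factor is controlled by $\|\psi\|_{H^2(\Omega)}$ through the scaled trace inequality $\|\nabla\psi\|^2_{L^2(e)} \lesssim h_K^{-1}\|\nabla\psi\|^2_{L^2(K)} + h_K\|D^2\psi\|^2_{L^2(K)}$ applied on each element adjacent to $e$ and summed using shape-regularity ($h_e\simeq h_K$). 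Collecting the estimates, using $\|\psi\|_{H^1(\Omega)}\le\|\psi\|_{H^2(\Omega)}\le C\|w\|_{L^2(\Omega)}$, factoring out one power of $\|w\|_{L^2(\Omega)}$ and dividing through yields the claim with a constant $C_1$ depending only on $\Omega$ and the shape-regularity of $\cT_h$.

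The main obstacle is not the algebra but securing the $H^2$ a priori estimate for the dual solution $\psi$ with a constant independent of $h$: this is precisely where convexity (or $C^{1,1}$ regularity) of $\partial\Omega$ enters, since full $H^2$ regularity is what makes $\nabla\psi$ admit single-valued edge traces and thereby annihilates the average-of-$\nabla\psi$ against jump-of-$w$ being contaminated by $\sjump{\nabla\psi}$. One must also verify that the trace and shape-regularity constants, and hence $C_1$, do not degenerate as $h\to 0$, which is exactly what makes the inequality useful as a norm-equivalence tool in the subsequent error analysis.
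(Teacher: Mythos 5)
Your proposal is correct and follows essentially the same route as the paper's own proof: a duality argument with the Poisson problem $-\Delta v = w$, elementwise integration by parts so that only the jumps of $w$ pair against the (single-valued) normal derivative of the dual solution, Cauchy--Schwarz with the $h_e^{\pm 1/2}$ scaling, a scaled trace inequality, and elliptic regularity to close the estimate. The only difference is cosmetic: you make the $H^2$ regularity hypothesis ($\Omega$ convex or $C^{1,1}$) and the bound $\|\psi\|_{H^2(\Omega)} \leq C\|w\|_{L^2(\Omega)}$ explicit, which the paper leaves implicit in its final step.
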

In the following result, we discuss the Lipschitz continuity of $S_h$. 
\begin{lemma}[$S_h$ is Lipschitz]
	For every given $z^1 ,z^2 \in L^2(\Omega)$, there exist unique solutions $S_h(z^1):= u^1_h \in \cK_h$ and $S_h(z^2):=u^2_h\in \cK_h$ of \eqref{disobs} and the following holds:
	\begin{align} \label{lipzSh}
	\|S_h(z^1)- S_h(z^2)\|_h \leq \|z^1-z^2\|_{L^2(\Omega)},
	\end{align}
	where $\|w\|_h^2:= \sum\limits_{ K \in \cT_h} \|\nabla w\|^2_{L^2(K )} + \sum\limits_{ e \in \cE_h} \frac{1}{h_e}\|\sjump{w}\|^2_{L^2(e)}$ is the norm on $\cV_h$ \cite{WHC:2010:DGOP}.
\end{lemma}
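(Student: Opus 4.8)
The plan is to settle well-posedness first and then obtain the estimate by the classical cross-testing argument for elliptic variational inequalities. For existence and uniqueness, since $\cK_h$ is nonempty, closed, and convex in $\cV_h$ and $\xA^{SIP}(\cdot,\cdot)$ is bounded and coercive on $\cV_h$ with respect to $\|\cdot\|_h$ (for the penalty parameter $\eta \ge \eta_0$ chosen large enough), the Lions--Stampacchia theorem delivers a unique $u^i_h := S_h(z^i) \in \cK_h$ solving \eqref{disobs} for each datum $z^i$, $i \in \{1,2\}$. This is precisely the existence/uniqueness statement already recorded immediately after \eqref{disobs}, so I would simply invoke it.

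For the Lipschitz bound, set $e_h := u^1_h - u^2_h$. The crucial observation is that the two discrete solutions are mutually admissible test functions: $u^2_h \in \cK_h$ and $u^1_h \in \cK_h$. Hence I would test \eqref{disobs} for the datum $z^1$ with $v_h = u^2_h$, and symmetrically test \eqref{disobs} for $z^2$ with $v_h = u^1_h$, which gives
\begin{align*}
\xA^{SIP}(u^1_h, u^2_h - u^1_h) &\ge (z^1, u^2_h - u^1_h), \\
\xA^{SIP}(u^2_h, u^1_h - u^2_h) &\ge (z^2, u^1_h - u^2_h).
\end{align*}
Adding these and regrouping the left-hand side by bilinearity of $\xA^{SIP}$ yields
\[
\xA^{SIP}(e_h, e_h) \le (z^1 - z^2,\, e_h).
\]

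The final step combines coercivity on the left with a duality bound on the right. Coercivity of $\xA^{SIP}$ gives $\|e_h\|_h^2 \lesssim \xA^{SIP}(e_h,e_h)$, while Cauchy--Schwarz together with Lemma~\ref{normeq} (which bounds $\|e_h\|_{L^2(\Omega)}$ by $\|e_h\|_h$) estimates the right-hand side by $\|z^1-z^2\|_{L^2(\Omega)}\,\|e_h\|_h$. Dividing through by $\|e_h\|_h$ (the case $e_h=0$ being trivial) produces the asserted inequality \eqref{lipzSh}.

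The main obstacle is not the algebra of the cross-testing, which is routine, but rather controlling the two constants that enter the last step so as to recover the clean constant $1$ in \eqref{lipzSh}: the coercivity constant of $\xA^{SIP}$ (which depends on $\eta$ and on the trace/inverse-inequality constants used to absorb the consistency contributions $\smean{\nabla \cdot}\,\sjump{\cdot}$ in $b_h$), and the norm-equivalence constant $C_1$ of Lemma~\ref{normeq}. I would track these explicitly and exploit the scaling of the penalty term to verify that their product does not exceed $1$; alternatively one may state the bound with a generic mesh-independent constant, since only mesh-independence of the Lipschitz modulus is actually needed in the subsequent error analysis.
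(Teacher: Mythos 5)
Your proposal follows essentially the same route as the paper's own proof: the identical cross-testing of \eqref{disobs} with the two discrete solutions as mutual test functions, followed by coercivity of $\xA^{SIP}(\cdot,\cdot)$ on the left and Cauchy--Schwarz together with Lemma~\ref{normeq} on the right. Your closing remark about needing to track the coercivity constant and $C_1$ to justify the clean constant $1$ in \eqref{lipzSh} is well taken, since the paper's proof silently takes both constants equal to one; this is a presentational gap in the paper rather than a difference in approach, and your fallback of stating the bound with a generic mesh-independent constant is exactly what the subsequent analysis requires.
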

\begin{proof}
	We prove \eqref{lipzSh} for every $K \in \cT_h$. We begin the proof by letting $S^{K}_h(z^1)= u^1_h~;~S^{K}_h(z^2)= u^2_h$ for some ${K} \in \cT_h$. Next, we insert $v_h=u^1_h \in \cK_h$ for $z = z^2$ in \eqref{disobs} and $v_h=u^2_h \in \cK_h$ for $z = z^1$ in \eqref{disobs} and we observe the following:
	\begin{align}
	\xA^{SIP}(u^2_h, u^1_h-u^2_h) &\geq ( z^2, u^1_h-u^2_h), \label{eq1}\\
	\xA^{SIP}(u^1_h, u^2_h-u^1_h) &\geq (z^1, u^2_h-u^1_h). \label{eq2}
	\end{align}
	We employ the coercivity of $\xA^{SIP}(\cdot, \cdot)$, 
	Lemma \ref{normeq} and add the equations \eqref{eq1}--\eqref{eq2} to have the following:
	\begin{align*}
	\|u^1_h-u^2_h\|^2_h \leq \xA^{SIP}(u^1_h-u^2_h,u^1_h-u^2_h) \leq \|z^1-z^2\|_{L^2(\Omega)} \|u^1_h-u^2_h\|_h.
	\end{align*}
	Finally, we conclude the proof of \eqref{lipzSh}.
\end{proof}
Next, we introduce the discrete Lagrange multiplier $\zeta_h \in \cV_h$ as
\begin{align}
\langle \zeta_h , v_h \rangle_h= \xA^{SIP}(u_h,v_h) - \int_{\Omega} z v_h~ dx  \quad \forall~ v_h \in \cV_h, \label{dsclag}
\end{align} which could be thought as an approximation to the continuous functional $\zeta$ stated in \eqref{eq:C4} and the inner product $\langle \cdot, \cdot \rangle_h$ is defined by: for any $v_h, w_h \in \cV_h$,
\begin{align}
\langle v_h, w_h \rangle_h:= \sum_{K \in \cT_h} \frac{|K|}{d+1} \sum_{p \in \cV_K} v_h(p)w_h(p).
\end{align}
Using the definition \eqref{dsclag} and discrete variational inequality \eqref{disobs}, we get that
\begin{align}
\langle \zeta_h , v_h-u_h \rangle_h \geq 0 \quad \forall~v_h \in \cK_h. \label{propslag}
\end{align}Let us suppose $\phi^p_h \in \cV_h$ be the canonical Lagrange basis function associated with the vertex $p$, i.e., $\phi^p_h$ takes the value one at the vertex $p$ and vanishes at all other vertices. We note that $\zeta_h|_K \in \cP_1(K)$ and by choosing $v_h = u_h + \phi^p_h \in \cK_h$ in \eqref{propslag}, we obtain that $\forall~K \in \cT_h$
\begin{align}
\zeta_h \geq 0~\text{in } K \iff \zeta_h(p) \geq 0~\forall~ p \in \cV_K.
\end{align} 
The finite element approximation for the stationarity conditions mentioned in Theorem \ref{thm:main1} can be stated as follows:
\begin{proposition} \label{disfor}
	Let $\tilde{z}_h \in \cV_h$ be a local optimum of \eqref{disocp}, $\tilde{u}_h \in \cV_h$ be the discrete state variable, $\tilde{p}_h \in \cV_h$ be the discrete adjoint variable and lastly $\lambda_h \in \cV_h$ be the discrete slack variable. Then, it holds that
	\begin{subequations}
		\begin{align}
		\xA^{SIP}(\tilde{u}_h,v_h) &= \int_{\Omega} \tilde{z}_hv_h~ dx + \langle \zeta_h , v_h \rangle_h \quad \forall~ v_h \in \cV_h, \label{1.8a} \\ \langle \zeta_h, \tilde{u}_h- g_h \rangle_h &=0 ~;~\zeta_h \geq 0~ \forall~ K \in \cT_h,  \label{1.8b} \\  \hspace*{-1cm}& (\tilde{u}_h - g_h)(p) \geq 0 ~~\forall~ p \in \cV_K \text{ and } \forall~ K \in \cT_h,\\ \xA^{SIP}(v_h, p_h) &= \int_{\Omega} (\tilde{u}_h - u_d) v_h~dx + \langle \lambda_h , v_h \rangle_h~~~\forall ~v_h \in \cV_h, \label{1.8c}\\  \langle \lambda_h, v_h \rangle_h &\leq 0 ~~\forall~ v_h \in \cM_{\tilde{u}_h} ~ \text{ and }~ p_h \in \cM_{\tilde{u}_h}, \label{1.8d} \\ p_h(x) &+ \nu \tilde{z}_h(x) = 0 ~\text{ a.e. in } \Omega.\label{1.8e}
		\end{align}
	\end{subequations}
	where,
	\begin{align*}
	\cM_{\tilde{u}_h} := \big\{v_h \in \cV_h~:~ v_h(x) \geq 0~\text{ a.e. on } \{\tilde{u}_h-g_h=0\} \text{ and } \langle \zeta_h, v_h \rangle_h =0\big \}.
	\end{align*}
\end{proposition}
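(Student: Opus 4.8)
The plan is to transcribe the proof of Theorem~\ref{thm:main1} (Mignot's strong stationarity argument) to the finite‑dimensional, polyhedral setting of \eqref{disocp}, exploiting that $\cK_h$ is a polyhedral convex subset of the finite‑dimensional space $\cV_h$.

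First, the primal conditions \eqref{1.8a}, \eqref{1.8b}, and the nodal constraint come essentially for free from the definition of the discrete multiplier. Equation \eqref{1.8a} is nothing but \eqref{dsclag} evaluated at $(\tilde{u}_h, \tilde{z}_h)$, while the nodal nonnegativity of $\tilde{u}_h - g_h$ is the defining property of $\cK_h$. For the complementarity \eqref{1.8b} I would test \eqref{propslag} with $v_h = g_h \in \cK_h$ to get $\langle \zeta_h, \tilde{u}_h - g_h\rangle_h \le 0$; combined with $\zeta_h \ge 0$ (obtained, as in the text preceding the statement, by testing \eqref{propslag} with $v_h = \tilde{u}_h + \phi^p_h$) and $(\tilde{u}_h - g_h)(p) \ge 0$ at every node, the fact that $\langle\cdot,\cdot\rangle_h$ is a nonnegative nodal quadrature forces the reverse inequality, hence equality.

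The substance lies in the dual conditions \eqref{1.8c}--\eqref{1.8e}. Here I would first record that, since \eqref{disobs} is a variational inequality over the polyhedral set $\cK_h$ in a finite‑dimensional space, the discrete solution map $S_h$ is directionally differentiable at $\tilde{z}_h$ and its derivative $w_h := S_h'(\tilde{z}_h; t)$ is the unique solution of the cone‑constrained variational inequality
\[
\xA^{SIP}(w_h, v_h - w_h) \ge (t, v_h - w_h) \quad \forall\, v_h \in \cM_{\tilde{u}_h}, \qquad w_h \in \cM_{\tilde{u}_h},
\]
the discrete counterpart of \eqref{1.4}; this is a standard fact for polyhedral VIs. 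Local optimality of $\tilde{z}_h$ for $F_h$ then yields, via the chain rule, the reduced variational inequality
\[
(\tilde{u}_h - u_d, w_h) + \nu (\tilde{z}_h, t) \ge 0 \qquad \forall\, t \in L^2(\Omega).
\]

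To extract $p_h$ and $\lambda_h$ I would follow Mignot's splitting. Testing this reduced inequality with directions $t$ orthogonal to $\cV_h$ (for which $w_h = 0$, since $S_h$ only sees the $L^2$‑projection of $z$ onto $\cV_h$) and their negatives forces $\tilde{z}_h \in \cV_h$. Next, restricting to directions $t$ for which $w_h$ lies in the lineality space $\cM_{\tilde{u}_h} \cap (-\cM_{\tilde{u}_h})$, the derivative VI becomes a linear identity with $w_h$ depending linearly on $t$; testing with both $t$ and $-t$ converts the reduced inequality into an equality, which identifies the adjoint $p_h$ through $\xA^{SIP}(v_h, p_h) = (\tilde{u}_h - u_d, v_h)$ on that subspace and yields \eqref{1.8e}, namely $p_h + \nu \tilde{z}_h = 0$. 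Defining $\lambda_h \in \cV_h$ as the residual of \eqref{1.8c}, the remaining---and main---task is to establish $p_h \in \cM_{\tilde{u}_h}$ together with $\langle \lambda_h, v_h\rangle_h \le 0$ for all $v_h \in \cM_{\tilde{u}_h}$. This is the crux that distinguishes strong stationarity from the weaker C-/M-stationarity; as in the continuous proof it rests on a polar‑cone/separation argument over the one‑sided part of the polyhedral cone $\cM_{\tilde{u}_h}$, using the VI characterization of $w_h$ to sign $(\tilde{u}_h - u_d, w_h)$ for directions entering the cone. Because $\cV_h$ is finite‑dimensional and $\cM_{\tilde{u}_h}$ is polyhedral, this separation reduces to a Farkas‑type lemma, which is precisely where the discrete argument is more transparent than its continuous counterpart.
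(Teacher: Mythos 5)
Your proposal is correct in substance, but note that the paper itself contains no proof of this proposition: the primal relations \eqref{1.8a}--\eqref{1.8b} are obtained in the text immediately preceding the statement (the definition \eqref{dsclag} of $\zeta_h$, the inequality \eqref{propslag}, and the test functions $v_h=\tilde{u}_h+\phi^p_h$), while the dual relations \eqref{1.8c}--\eqref{1.8e} are only asserted, with the equivalent matrix version (Theorem \ref{thm:main2}) delegated to the theory of finite-dimensional MPECs \cite{Bergoun:2000:OCPOB}. Your plan---transcribing Mignot's argument \cite{Mignot:1976:CEVI,Mignot:1984:OCPVI} to the polyhedral set $\cK_h\subset\cV_h$---is a self-contained reconstruction of precisely that cited theory, so it complements rather than conflicts with the paper; your primal step in fact coincides with the paper's own computation. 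For the dual step your outline is sound, and the Farkas-type step you gesture at but do not execute does close: set $p_h:=-\nu\tilde{z}_h$ (this is \eqref{1.8e}), let $\lambda_h\in\cV_h$ be the $\langle\cdot,\cdot\rangle_h$-representer of $v_h\mapsto \xA^{SIP}(v_h,p_h)-\int_\Omega(\tilde{u}_h-u_d)v_h\,dx$ (this is \eqref{1.8c}); then (i) for $v_h\in\cM_{\tilde{u}_h}$ choose $t\in\cV_h$ with $(t,\cdot)=\xA^{SIP}(v_h,\cdot)$ on $\cV_h$, so that your $w_h=S_h'(\tilde{z}_h;t)$ equals $v_h$ by uniqueness for the derivative VI, and the reduced inequality gives $\langle\lambda_h,v_h\rangle_h\le 0$; and (ii) for $t$ in the polar cone of $\cM_{\tilde{u}_h}$ one gets $w_h=0$, hence $(p_h,t)\le 0$, and the bipolar theorem (the cone is polyhedral, hence closed) gives $p_h\in\cM_{\tilde{u}_h}$. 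Your route buys transparency: it makes explicit that strong, not merely C-, stationarity holds without any constraint qualification because the control is ample (every $t\in L^2(\Omega)$ is an admissible direction and $t\mapsto w_h$ maps onto the critical cone) and $\cK_h$ is polyhedral; the paper's citation route buys brevity.

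One point needs care in your write-up. The cone produced by polyhedral differentiability is the nodal critical cone $\big\{v_h\in\cV_h:\ v_h(p)\ge 0 \text{ at every node } p \text{ with } \tilde{u}_h(p)=g_h(p),\ \langle\zeta_h,v_h\rangle_h=0\big\}$, whereas the proposition defines $\cM_{\tilde{u}_h}$ through an a.e.\ condition on $\{\tilde{u}_h-g_h=0\}$. For piecewise-linear functions these coincide only when every active node carries a contact set of positive measure; if $\tilde{u}_h-g_h$ vanishes only at isolated nodes or edges, the a.e.\ condition is vacuous there, the stated cone is strictly larger, and your argument then proves \eqref{1.8d} only for the smaller nodal cone. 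This is a flaw of the statement's notation rather than of your proof---the matrix version in Theorem \ref{thm:main2} is formulated nodally, which is exactly what your argument delivers---but you should define the cone nodally in your proof to avoid the mismatch.
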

For any $K \in \cT_h$, let $\{\phi^K_p~;~p \in \cV_K\}$ be its local Lagrange basis functions, then we set $v_h^K=v_h|_K$. Using this notation, we define $v_h$ on the single element $K$ as
\begin{align} \label{notation}
v_h^K:= \sum_{p \in \cV_K} v_h(p) \phi^K_p.
\end{align}
Then, we can rewrite \eqref{disobs} in the following way using the above notation \eqref{notation} and support of the local basis functions $\phi^K_p$, i.e., $\phi^K_p(x) = 0\quad \text{if } x \in \Omega \setminus K$.
\begin{align}
\sum_{p,q \in \cV_K} u_h(p) \xA^{SIP}( \phi^K_p,\phi^K_q ) (v_h(q)-u_h(q)) & \geq \sum_{q\in \cV_K} (z, \phi^K_q) (v_h(q)-u_h(q)) \quad \forall~ K \in \cT_h, \notag \\ \hspace{-1cm}\iff  (^\mathsf{\top}\textbf{u}_{h,K}) \mathbf{\xA}^{SIP}_K (\textbf{v}_{h,K}-\textbf{u}_{h,K}) & \geq (^\mathsf{\top}\textbf{b}_{z,K}) (\textbf{v}_{h,K}-\textbf{u}_{h,K})~ \forall~ \textbf{v}_{h,K} \in \mathbb{R}^{|\cV_K|}  \label{disvar} \\ & \hspace*{-2cm} \text{and } \textbf{v}_{h,K}-\mathbf{g}_{h,K} \geq 0 \quad \forall~ K \in \cT_h, \notag
\end{align}
where \begin{enumerate}
	\item $\mathbf{\xA}^{SIP}_K\in \mathbb{R}^{|\cV_K| \times |\cV_K|}~\text{having the } (p,q) \text{ entry as } \xA^{SIP}( \phi^K_p,\phi^K_q )$.
	\item $\textbf{b}_{z,K}:= (z, \phi^K_q)_{q \in \cV_K} \in \mathbb{R}^{|\cV_K|}$.
	\item $\textbf{w}_{h,K}:= (w_h(p))_{p \in \cV_K}~\forall~ w_h \in \cV_h$.
	\item $^\mathsf{\top}\textbf{a}:=$ Transpose of the vector $\textbf{a} \in \mathbb{R}^{|\cV_K|}$.
	\item Let $w_h, v_h \in \cV_h$, then $\textbf{w}_{h,K}-\mathbf{y}_{h,K} \geq 0$ is understood in the sense that $w_h(p) -y_h(p) \geq 0 ~\forall~p \in \cV_K$.
\end{enumerate}

\subsection{The Matrix Formulation}
We revisit the matrix version of the Lagrange multiplier in the following way:
\begin{align*}
\mathbf{\zeta}_K:= \mathbf{\xA}^{SIP}_K\textbf{u}_{h,K} - \textbf{b}_{z,K} \in \mathbb{R}^{|\cV_K|} \quad \text{for every } K \in \cT_h.
\end{align*}
In the view of the definition of $\mathbf{\zeta}_K$, we reformulate \eqref{disvar} as:
\begin{align}
&\mathbf{\xA}^{SIP}_K\mathbf{u}_{h,K}  = \mathbf{\zeta}_K +  \textbf{b}_{z,K},  \label{disc1}\\& u_h(p) \geq g_h(p) ~\forall~p \in \cV_K, \quad \mathbf{\zeta}_K \geq 0, \quad (^\mathsf{\top}\mathbf{\zeta}_K)(\textbf{u}_{h,K}-\mathbf{g}_{h,K})=0. \label{disc2}
\end{align}
For each element $K \in \cT_h$, we have the equivalent mathematical problem with complementarity constraints (MPCC) to \eqref{disocp}. For some $K \in \cT_h$, we define the minimum function $\text{min}(\textbf{a}, \textbf{b})$, where minimum is understood in the componentwise sense
\begin{equation} \label{problem2}
\begin{cases}
\min \quad \quad \frac{1}{2} (^\mathsf{\top}\textbf{u}_{h,K}) \mathbf{\cN} \textbf{u}_{h,K} - (^\mathsf{\top}\textbf{u}_{h,K}) \textbf{b}_d + \frac{\nu}{2} \|z\|^2_{L^2(\Omega)} & \\ \text{subject to}~~  \mathbf{\xA}^{SIP}_K\textbf{u}_{h,K}  = \mathbf{\zeta}_K +  \textbf{b}_{z,K} \quad \text{and} \quad \min(\textbf{u}_{h,K}-\mathbf{g}_{h,K}, \mathbf{\zeta}_K) = 0 , 
\end{cases}
\end{equation}
where \begin{enumerate}
	\item For every $K \in \cT_h$, the mass matrix $\cN$ is defined as
	\begin{align}
	\cN \in \mathbb{R}^{|\cV_K| \times |\cV_K|}~;~\cN_{pq} := \int_K \phi^K_p \phi^K_q dx~ \forall p,q \in \cV_K.
	\end{align} 
	\item For all $p \in \cV_K$ and $K \in \cT_h$, we define $\textbf{b}_d: = \int_K u_d \phi^K_p dx$.
\end{enumerate}
The matrix version of the finite element approximation given in Proposition~\ref{disfor} is discussed in the next theorem. The proof of the next result follows along the same as in theory of finite dimensional MPECs \cite{Bergoun:2000:OCPOB}. Here, we use $\odot$ to indicate the Hadamard product.
\begin{theorem} \label{thm:main2}
	Let $\tilde{z}_h:=(\tilde{z}^K_{h})_{K \in \cT_h} \in L^2(\Omega)$ be a discrete optimal solution to \eqref{problem2} having an associated state variable $\tilde{u}_h:=(\tilde{u}^K_{h})_{K \in \cT_h} \in \cV_h$ and a Lagrange multiplier (matrix version) $\mathbf{\tilde{\zeta}}_K:= (\tilde{\zeta}_{K,i})^3_{i=1} \in \mathbb{R}^{|\cV_K|}$. Therefore, we have existence of a discrete adjoint state variable $\tilde{p}_h:=(\tilde{p}^K_{h})_{K \in \cT_h}  \in \cV_h$ and a discrete multiplier $\mathbf{\lambda}_K:=(\lambda_{K,i})^3_{i=1} \in \mathbb{R}^{|\cV_K|} $ such that the following holds $\forall ~K \in \cT_h$:
	\begin{subequations}
		\begin{align}
		\mathbf{\xA}^{SIP}_K \mathbf{\tilde{u}}_{h,K}  &= \mathbf{\tilde{\zeta}}_K +  \textbf{b}_{\tilde{z}_{h},K}, \\ (^\mathsf{\top}\mathbf{\tilde{\zeta}}_K) (\mathbf{\tilde{u}}_{h,K}-\mathbf{g}_{h,K})& =0~;~\mathbf{\tilde{\zeta}}_K \geq 0 \text{ and } \mathbf{\tilde{u}}_{h,K}-\mathbf{g}_{h,K} \geq 0, \\ (^\mathsf{\top}\mathbf{\xA}^{SIP}_K) \mathbf{\tilde{p}}_{h,K} &= \cN\mathbf{\tilde{u}}_{h,K} - \textbf{b}_d + \mathbf{\lambda}_K,\\ (\mathbf{\tilde{u}}_{h,K}-\mathbf{g}_{h,K}) \odot \mathbf{\lambda}_K &= \mathbf{0}~;~ \mathbf{\tilde{\zeta}}_K \odot \mathbf{\tilde{p}}_{h,K}  = \mathbf{0},  \\ \lambda_{K,i} \leq 0 ~\forall~ i \in \{1,2,3\}~&;~\tilde{p}_h(q) \geq 0 ~\forall~ q \in \cV_K \text{ along with } \tilde{u}_h(q)- g_h(q) = 0= \tilde{\zeta}_{K,q}, \\ \nu \tilde{z}^K_{h}(x) + \tilde{p}^K_h (x) & = 0 \quad \text{a.e. in } K. \label{obs2}
		\end{align}
	\end{subequations}
\end{theorem}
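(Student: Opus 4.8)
The plan is to treat \eqref{problem2} as a finite-dimensional mathematical program with complementarity constraints (MPCC), one per element $K$, and to derive its strong stationarity system exactly as Proposition~\ref{disfor} is the elementwise, matrix-form transcription of the continuous system of Theorem~\ref{thm:main1}. The control enters only through the load vector $\textbf{b}_{z,K}=(z,\phi^K_q)_{q\in\cV_K}$ and the Tikhonov term $\tfrac{\nu}{2}\|z\|^2_{L^2(\Omega)}$, so the first move is to eliminate the state. For each $K$ the constraint pair \eqref{disc1}--\eqref{disc2} is a linear complementarity problem (LCP): given $\textbf{b}_{z,K}$, find $(\mathbf{u}_{h,K},\mathbf{\zeta}_K)$ with $\mathbf{\xA}^{SIP}_K\mathbf{u}_{h,K}=\mathbf{\zeta}_K+\textbf{b}_{z,K}$ and $\min(\mathbf{u}_{h,K}-\mathbf{g}_{h,K},\mathbf{\zeta}_K)=\mathbf{0}$. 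Since $\mathbf{\xA}^{SIP}_K$ inherits coercivity from $\xA^{SIP}(\cdot,\cdot)$, this LCP is uniquely solvable and its solution map $z\mapsto S_h(z)$ is globally Lipschitz (shown above) and piecewise affine, hence directionally differentiable — the discrete counterpart of the variational inequality \eqref{1.4} in Remark~\ref{rem:1}.

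Next I would write the first-order condition. Since $\tilde z_h$ is a local minimizer of the reduced functional $F_h$ in \eqref{disocp}, its directional derivative is nonnegative in every admissible direction $\delta z$; by the chain rule this reads (summed over $K$) $\langle \cN\mathbf{\tilde u}_{h,K}-\textbf{b}_d,\, \mathbf{d}_K\rangle + \nu(\tilde z_h,\delta z)\ge 0$, where $\mathbf{d}_K$ is the elementwise restriction of $S_h'(\tilde z_h;\delta z)$. This $\mathbf{d}_K$ solves the \emph{linearized} LCP obtained by partitioning the nodes of each $K$ into the strongly active indices ($\tilde u_h-g_h=0$, $\tilde\zeta>0$), the inactive indices ($\tilde u_h-g_h>0$, $\tilde\zeta=0$), and the biactive set ($\tilde u_h-g_h=0=\tilde\zeta$); these encode the discrete critical cone $\cM_{\tilde u_h}$ of Proposition~\ref{disfor}. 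I would then introduce the adjoint $\tilde p_h$ through the transposed state operator, $(^\mathsf{\top}\mathbf{\xA}^{SIP}_K)\mathbf{\tilde p}_{h,K}=\cN\mathbf{\tilde u}_{h,K}-\textbf{b}_d+\mathbf{\lambda}_K$, choosing $\mathbf{\lambda}_K$ to absorb the constraint contributions, and test it against $\mathbf{d}_K$ to convert the state-space inequality into a control-space one. Because $z$ is unconstrained, the directional derivative must vanish in both $\pm\delta z$, so the Tikhonov term forces the equality $\nu\tilde z^K_h+\tilde p^K_h=0$ a.e.\ on $K$, which is \eqref{obs2} and also confirms that each local optimum lies in $\cV_h$.

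The complementarity and sign relations are then read off from the index partition: on inactive nodes $\tilde\zeta_{K,q}=0$ gives $\mathbf{\tilde\zeta}_K\odot\mathbf{\tilde p}_{h,K}=\mathbf{0}$ and the free linearized equation forces the corresponding $\lambda_{K,i}=0$; on strongly active nodes $\tilde u_h(q)-g_h(q)=0$ gives $(\mathbf{\tilde u}_{h,K}-\mathbf{g}_{h,K})\odot\mathbf{\lambda}_K=\mathbf{0}$, while $\tilde\zeta_{K,q}>0$ forces $\tilde p_h(q)=0$. The crux — and the step I expect to be hardest — is the biactive set, where both $\tilde u_h(q)-g_h(q)=0$ and $\tilde\zeta_{K,q}=0$. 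There the linearized LCP retains a genuine componentwise complementarity condition rather than reducing to a linear equation, so the admissible $\mathbf{d}_K$ range over a cone; exploiting the \emph{full} cone of directions (the finite-dimensional analogue of Mignot's polarity argument) is precisely what upgrades weak/C-stationarity to strong stationarity and yields the sign conditions $\lambda_{K,i}\le 0$ and $\tilde p_h(q)\ge 0$ on the biactive indices. With $\mathbf{\lambda}_K$ and $\tilde p_h$ so constructed, every relation of the theorem holds elementwise for all $K\in\cT_h$, completing the argument along the finite-dimensional MPEC lines of \cite{Bergoun:2000:OCPOB}.
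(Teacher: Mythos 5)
Your proposal is correct and takes essentially the same route as the paper: the paper provides no written proof of Theorem~\ref{thm:main2}, deferring entirely to the theory of finite-dimensional MPECs in \cite{Bergoun:2000:OCPOB}, and that theory---elementwise LCP reduction with the positive definite matrix $\mathbf{\xA}^{SIP}_K$, the piecewise-affine (hence directionally differentiable) solution map, construction of the adjoint and slack multiplier, and the full-cone/polarity argument on the biactive nodes that yields the sign conditions of strong stationarity---is precisely what you sketch. Your identification of the surjectivity of $z|_K \mapsto \textbf{b}_{z,K}$ (the unconstrained, fully acting control) as the structural reason every local minimizer is strongly rather than merely C-stationary is exactly the point on which the cited finite-dimensional argument rests.
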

\begin{remark} \label{key1}
	With the help of \eqref{obs2} and if $\tilde{z}_h \in L^2(\Omega)$ is a discrete optimal solution to \eqref{disocp}, then $\tilde{z}_h \in \cV_h$.
\end{remark}
\section{The Error Analysis} \label{sec5}
In this section, we discuss the a priori error analysis for the symmetric interior penalty discontinuous Galerkin method for the optimal control problem governed by the elliptic obstacle problem (Problem \eqref{problem1}). To prove the error estimates for the state (local matching) and control variables, we derive the improved $L^2$-error estimates for the obstacle problem  for $\cP_1$ DG FEM discretization. We also recall the energy norm estimates for the elliptic variational inequality \cite{WHC:2010:DGOP} for the state (global matching) and control variables.
\subsection{Error estimates for the obstacle problem}
For a given control $z \in L^2(\Omega)$, we consider the following obstacle problem:
\begin{align}
a(u, v-u) &\geq (z, v-u) \quad \forall~ v \in \cK,  \label{obs} \\
u &\in \cK=\{w \in \cV~:~w(x) \geq g(x)~\text{a.e. in } \Omega\}. \label{obs1}
\end{align}
The next theorem recalls the energy norm error estimates for the obstacle problem from \cite{WHC:2010:DGOP}.
See Theorem 4.1 on Pg. 718 in \cite{WHC:2010:DGOP} for a proof. We recall the regularity result for the obstacle problem from Theorem~\ref{asp1} which has been used below.
\begin{theorem} \label{main:thm1}
	Let $u$ and $u_h$ be the continuous and discrete solutions of \eqref{obs} and \eqref{disobs}, respectively. Then, 
	there is a positive constant $C$ depending on the penalty parameter $\eta>0$ such that the following error estimate 
	holds
	\begin{align}
	\vertiii{u-u_h} \leq Ch\big(\|u\|_{H^2(\Omega)} + \|g\|_{H^2(\Omega)}\big), \label{err1}
	\end{align}
	where $h:= \max \{h_K~:~K \in \cT_h\}$.
\end{theorem}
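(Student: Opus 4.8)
The plan is to run a Falk-type energy argument, but adapted to the nonconforming setting where neither $\cK_h \subseteq \cK$ nor $\cV_h \subseteq \cV$. I would work on the enlarged space $\cW = \cV_h + (\cV \cap H^2(\Omega))$ and first record the two structural properties of the symmetric interior penalty form that are standard once the penalty $\eta$ is large enough (see \cite{arnold1982interior,BScott:2008:FEM,WHC:2010:DGOP}): coercivity $\xA^{SIP}(w,w) \geq \alpha \vertiii{w}^2$ and boundedness $|\xA^{SIP}(v,w)| \leq M \vertiii{v}\vertiii{w}$ for all $v,w \in \cW$. Since Theorem~\ref{asp1}(b) gives $u \in H^2(\Omega)\cap H^1_0(\Omega)$, its jumps $\sjump{u}$ vanish on every $e \in \cE_h$, and elementwise integration by parts collapses all edge contributions, yielding the consistency identity
\begin{align*}
\xA^{SIP}(u,w) = \int_\Omega (z+\zeta)\,w\,dx \qquad \forall~ w \in \cW,
\end{align*}
where $\zeta = -\Delta u - z \in L^2(\Omega)$ is the multiplier of \eqref{eq:C4}--\eqref{eq:C5}, now an $L^2$ function, with $\zeta \geq 0$ and the pointwise complementarity $\zeta\,(u-g)=0$ a.e.

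Next I would start from coercivity and, for an arbitrary $v_h \in \cK_h$, split
\begin{align*}
\alpha \vertiii{u-u_h}^2 \leq \xA^{SIP}(u-u_h,u-u_h) = \xA^{SIP}(u-u_h,u-v_h) + \xA^{SIP}(u,v_h-u_h) - \xA^{SIP}(u_h,v_h-u_h).
\end{align*}
Inserting the discrete inequality \eqref{disobs} in the form $\xA^{SIP}(u_h,v_h-u_h) \geq (z,v_h-u_h)$ together with the consistency identity, the last two terms telescope to the multiplier residual $(\zeta, v_h-u_h)$, leaving
\begin{align*}
\alpha \vertiii{u-u_h}^2 \leq \xA^{SIP}(u-u_h,u-v_h) + (\zeta, v_h - u_h).
\end{align*}
The first term is $\leq M \vertiii{u-u_h}\vertiii{u-v_h}$, to be handled later by Young's inequality.

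The crux is the residual $(\zeta, v_h - u_h)$, which I would treat with $v_h = I_h u$, the continuous Lagrange interpolant of $u$; because $u \in C(\overline\Omega)$ (as $d \le 3$) and $u \geq g$, one has $(I_h u)(p) = u(p) \geq g(p) = g_h(p)$ at every vertex, so $I_h u \in \cK_h$. Splitting $(\zeta, I_h u - u_h) = (\zeta, I_h u - u) + (\zeta, u - u_h)$, the first piece is $\leq \|\zeta\|_{L^2}\|u - I_h u\|_{L^2} \lesssim h^2\|\zeta\|_{L^2}\|u\|_{H^2}$. For the second piece the complementarity $\zeta(u-g)=0$ gives $(\zeta, u) = (\zeta, g)$, hence $(\zeta, u - u_h) = (\zeta, g - g_h) + (\zeta, g_h - u_h)$; the key observation is that $u_h \in \cK_h$ forces $u_h(p) \geq g_h(p)$ at all vertices of each $K$, so the $\cP_1$ function $(g_h - u_h)|_K$ is nonpositive throughout $K$, whence $g_h - u_h \leq 0$ a.e. and, since $\zeta \geq 0$, $(\zeta, g_h - u_h) \leq 0$. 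Thus $(\zeta, u - u_h) \leq \|\zeta\|_{L^2}\|g - g_h\|_{L^2} \lesssim h^2\|\zeta\|_{L^2}\|g\|_{H^2}$. Collecting, applying Young's inequality to absorb $\tfrac{\alpha}{2}\vertiii{u-u_h}^2$, and using the interpolation bounds $\vertiii{u - I_h u} \lesssim h\|u\|_{H^2}$ and $\|g - g_h\|_{L^2}\lesssim h^2\|g\|_{H^2}$, together with $\|\zeta\|_{L^2}\lesssim \|u\|_{H^2}+\|g\|_{H^2}$ (the support of $\zeta$ lying in the contact set, combined with the regularity of Theorem~\ref{asp1}(b)), yields \eqref{err1}.

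I expect the main obstacle to be precisely the nonconformity $\cK_h \not\subseteq \cK$. A DG function in $\cK_h$ is neither $H^1$-conforming nor pointwise above the obstacle, so the continuous inequality cannot be tested with $u_h$ and the residual $(\zeta, u-u_h)$ is only $O(\vertiii{u-u_h})$ under a naive bound, which is too weak to absorb. The resolution that makes the argument close at the optimal rate is the complementarity bookkeeping above: converting $(\zeta, u-u_h)$ into $(\zeta, g - u_h)$ and then exploiting that the nodal constraint propagates to the genuinely pointwise sign $g_h - u_h \leq 0$ on each simplex, so that the $\zeta$-residual is controlled purely by the $O(h^2)$ obstacle interpolation error rather than by the energy error itself.
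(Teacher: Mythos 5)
Your strategy is essentially the right one, and in fact it is essentially \emph{the} proof of this result: the paper itself does not prove Theorem \ref{main:thm1} but cites \cite{WHC:2010:DGOP} (Theorem 4.1 there), whose argument is precisely this Falk-type scheme --- consistency of $\xA^{SIP}$ for the $H^2$ solution, splitting the error into an approximation term and a multiplier residual, the choice $v_h = I_h u \in \cK_h$, and the complementarity bookkeeping $(\zeta, u-u_h) = (\zeta, g-g_h) + (\zeta, g_h-u_h)$ with $(\zeta, g_h-u_h) \le 0$ because $g_h - u_h$ is a $\cP_1$ function on each simplex that is nonpositive at its vertices. That last observation is indeed the crux of why the nonconformity $\cK_h \not\subseteq \cK$ does not degrade the rate, and you have it right.

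Two of your supporting claims are false as stated, though both admit standard repairs that leave your structure intact. (i) Coercivity $\xA^{SIP}(w,w) \ge \alpha \vertiii{w}^2$ on all of $\cW$ cannot hold: for a conforming $w \in H^1_0(\Omega) \cap H^2(\Omega)$ every jump vanishes, so $\xA^{SIP}(w,w) = \|\nabla w\|^2_{L^2(\Omega)}$, while $\vertiii{w}^2$ additionally contains $\sum_{K} h_K^2 \|D^2 w\|^2_{L^2(K)}$, which on a fixed mesh can be made arbitrarily large relative to $\|\nabla w\|^2_{L^2(\Omega)}$. Coercivity is available only for discrete functions (where $\vertiii{\cdot}$ and $\|\cdot\|_h$ coincide); on $\cW$ one has boundedness plus at best a G\aa rding-type lower bound with defect $\sum_K h_K^2\|D^2 w\|^2_{L^2(K)}$. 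The repair: apply coercivity to the discrete error $e_h := I_h u - u_h \in \cV_h$, i.e.\ $\alpha \vertiii{e_h}^2 \le \xA^{SIP}(e_h,e_h) = \xA^{SIP}(I_h u - u, e_h) + \xA^{SIP}(u - u_h, e_h)$, bound the first term by boundedness on $\cW$, treat the second with your consistency-plus-discrete-VI computation (it again telescopes to $(\zeta, I_h u - u_h)$), and finish with the triangle inequality $\vertiii{u - u_h} \le \vertiii{u - I_h u} + \vertiii{e_h}$; alternatively, keep $w = u - u_h$ and note the G\aa rding defect equals $\sum_K h_K^2 \|D^2 u\|^2_{L^2(K)} \le h^2 \|u\|^2_{H^2(\Omega)}$, which is of the target order. (ii) The bound $\|\zeta\|_{L^2(\Omega)} \lesssim \|u\|_{H^2(\Omega)} + \|g\|_{H^2(\Omega)}$ is not true: $\zeta = -\Delta u - z$, and even using that $\zeta$ is supported in the contact set, where $\Delta u = \Delta g$ a.e., one only gets $\|\zeta\|_{L^2(\Omega)} \le \|\Delta g\|_{L^2(\Omega)} + \|z\|_{L^2(\Omega)}$; the data term cannot be dropped (take $z$ very negative on the contact set: $u$ and $g$ are unchanged while $\zeta$ grows without bound). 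Your argument therefore proves \eqref{err1} with right-hand side $Ch\bigl(\|u\|_{H^2(\Omega)} + \|g\|_{H^2(\Omega)} + \|z\|_{L^2(\Omega)}\bigr)$, which is how the estimate actually reads in \cite{WHC:2010:DGOP}; the paper's statement silently absorbs this data dependence into the constant $C$.
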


With the help of Theorem \ref{main:thm1} and Lemma \ref{normeq}, we have the estimate \eqref{err2}.          
\begin{remark} \label{rem1}
	Let $C >0$ be a positive constant which depends on the penalty parameter $\eta>0$, the domain $\Omega$ and on the given bounds $\|u\|_{H^2(\Omega)} $ and $\|g\|_{H^2(\Omega)}$. Then, there holds
	\begin{align}
	\|u-u_h\|_{L^2(\Omega)} \leq Ch^{\alpha}. \label{err2}
	\end{align}	
	Note that, $\alpha =1$ is evident from Theorem \ref{main:thm1} but the proof for $\alpha >1$ is much more delicate. This will be the focus of subsection \ref{sub5.2} and it is one of the main novel aspect of this paper. The celebrated Aubin-Nitsche trick from the theory of elliptic PDEs does not directly apply to the obstacle problem \cite{natterer1976optimale}. To overcome this difficulty, we will employ the $L^{\infty}$-error estimates for the obstacle problem based on the Discrete Maximum Principle (DMP) for DG methods \cite{badia2015discrete}. The initial ideas were laid by Nitsche \cite{nitsche1975l_} which are based on the DMP. The key difference is that we need $W^{2,p}$- regularity on the solution of obstacle problem (Theorem \ref{asp1}) whereas \cite{nitsche1975l_} requires $W^{2, \infty}$-regularity. 
\end{remark}
\subsection{Improved error estimates for the obstacle problem} \label{sub5.2}
	We adopt the following definition of local extremum in the DG setting and 
	refer to article \cite{badia2015discrete} for more details.
\begin{definition} \label{ext}
	Let $v_h \in \cV_h$ and $K \in \cT_h$, then $v_h$ has a local discrete minimum (resp., maximum) at a node $x_i \in K$ if $v^K_h(x_i) \leq v_h(z)$ (resp., $v^K_h(x_i) \geq v_h(z)$) $\forall~z \in \omega_i= \bigcup_{x_i \in K} K$.
\end{definition}
Let us consider the index set $\mathcal{N}:= \{1,2, \cdots , N+M\}$ to be the set of all corner points of $\cT_h$, where $\{x_i\}_{i=1}^N$ and $\{x_i\}_{i=N+1}^M$ denote the total number of interior and boundary corner points of $\cT_h$, respectively. Next, using the above Definition \ref{ext} of a local discrete extremum, we are in the position to state the DMP. 
\begin{lemma} (Discrete Maximum Principle) \label{discmax}
	Let $\mathcal{J} \subset \mathcal{N}$ be a given set, we let
	\begin{align}
	\mathcal{J}^*&:= \{j \in \mathcal{N}~:~\text{there exist }~i \in \mathcal{J} \text{ with a corner point } x_j \in \omega_i\}, \\ 
	\mathcal{J}_*&:=  \{j \in \mathcal{J}~:~1 \leq j \leq N\}.
	\end{align} Note that $\mathcal{J}_*$ is the collection of those corner points in $\mathcal{J}$ that are in interior of $\Omega$ and $\mathcal{J}^*$ denotes the collection of corner points which lie in the union of elements which support the vertices in $\mathcal{J}$. Let $K \in \cT_h$ and $v_h \in \cV_h$ be such that $v_h$ is locally minimal (resp., maximal) on an interior corner point $x_i$ in $K$ and if $\forall~i \in \mathcal{I}:= \{1,2, \cdots, N\},$ the following holds
	\begin{align}
	&\cA^{SIP}_K (v_h, \phi_{i}^K) \leq -\sum_{z_i \in e, e \in \partial K} \Gamma_e h_e^{-1} \int_e |\sjump{v_h}| - \delta_K h_K^{-1} \int_{K} |\nabla v^K_h|,
	\end{align} 
	where $\Gamma_e>0$ and $\delta_K>0$ are positive constants \\
	(resp., $\cA^{SIP}_K (v_h, \phi_{i}^K) \geq -\sum_{z_i \in e, e \in \partial K} \Gamma_e h_e^{-1} \int_e |\sjump{v_h}| - \delta_K h_K^{-1} \int_{K} |\nabla v^K_h| \quad i \in \mathcal{I}= \{1,2, \cdots, N\}$), then $v_h$ has no strict local discrete minimum (resp., maximum) in any interior corner point. Moreover, the global minimum (resp., maximum) is on the boundary, i.e.,
	\begin{align}
	v^K_h(x_j) \leq \max \{0, M\} \quad \forall~ j \in \mathcal{J}_*,
	\end{align}
	where $M:= \max\limits_{{j \in \mathcal{J}^* \setminus \mathcal{J}_*}} v_h^K(x_j)$.
\end{lemma}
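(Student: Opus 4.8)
The plan is to prove the statement by contradiction at the level of a single element and its vertex patch, establishing first that no interior corner point can be a \emph{strict} local discrete extremum, and then propagating this to the boundary bound. I would treat the minimum case in detail; the maximum case (which produces the displayed estimate $v_h^K(x_j)\le\max\{0,M\}$) is entirely parallel with the inequalities reversed. Fix an interior corner point $x_i$ and suppose, contrary to the claim, that $v_h$ has a strict local discrete minimum there in the sense of Definition~\ref{ext}, so that $v_h^K(x_i)\le v_h(z)$ for every $z\in\omega_i$ with strict inequality at some point of the patch.

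The computational core is a nodal expansion of $\cA^{SIP}_K(v_h,\phi_i^K)$. Writing $v_h^K=\sum_{p\in\cV_K}v_h(p)\phi_p^K$ and using the partition of unity $\sum_{p}\phi_p^K\equiv 1$ (hence $\sum_p\nabla\phi_p^K=0$), the element-interior part reduces to $\sum_{p\ne i}\big(v_h(p)-v_h(x_i)\big)\int_K\nabla\phi_p^K\cdot\nabla\phi_i^K$, whose sign is governed by the nodal differences $v_h(p)-v_h(x_i)$ that are all nonnegative at the minimum. Separately, I would collect the face contributions of $b_h(v_h,\phi_i^K)$ over the edges $e\subset\partial K$ incident to $x_i$, namely the average-of-gradient consistency terms and the weighted jump penalty. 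The objective is to show that, at a strict interior minimum, these pieces combine so that $\cA^{SIP}_K(v_h,\phi_i^K)$ is \emph{strictly} incompatible with the assumed inequality, contradicting the hypothesis and hence ruling out strictness.

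The main obstacle---and the reason this argument is sharper than \cite{Meyer:2013:OCPOB,dond2016nonconforming}---is that no mesh restriction is imposed, so the off-diagonal entries $\int_K\nabla\phi_p^K\cdot\nabla\phi_i^K$ need not have the sign pattern of an M-matrix (obtuse elements are allowed) and the DG consistency/average terms are not sign-definite either. Following \cite{badia2015discrete}, the remedy is to absorb every such anti-monotone contribution into the two controlling quantities on the right-hand side: the non-sign-definite face terms are dominated by the jump penalty $\sum_{x_i\in e}\Gamma_e h_e^{-1}\int_e|\sjump{v_h}|$, and the residual interior contributions by the gradient term $\delta_K h_K^{-1}\int_K|\nabla v_h^K|$, using inverse and trace inequalities on $K$ with constants $\Gamma_e,\delta_K$ depending only on the shape regularity and on the penalty parameter $\eta$. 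The delicate point is verifying that these local constants can be fixed uniformly in $h$ so that the bookkeeping closes; this is where $\eta\ge\eta_0$ being sufficiently large is used.

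Finally, once no interior corner point is a strict local discrete extremum, I would deduce the boundary bound by a propagation argument along the connectivity encoded by $\mathcal{J}^*$ and $\mathcal{J}_*$. If the extreme value over $\bigcup_{i\in\mathcal{J}}\omega_i$ were attained at an interior node $x_j\in\mathcal{J}_*$, the absence of a strict interior extremum would force the same value at an adjacent node; chaining this across overlapping patches transports the extremum to a node of the outer layer $\mathcal{J}^*\setminus\mathcal{J}_*$ (yielding the constant $M$) or, through the homogeneous boundary condition enforced weakly by the SIPG penalty, to the comparison value $0$. Applied to the maximum version this yields $v_h^K(x_j)\le\max\{0,M\}$ for all $j\in\mathcal{J}_*$, and the minimum version gives the analogous lower bound.
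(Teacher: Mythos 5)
First, a factual point: the paper contains no proof of Lemma~\ref{discmax} at all --- the result is recalled from \cite{badia2015discrete} (see the sentences surrounding Definition~\ref{ext} and the introduction, where the authors state they follow that reference), so you are attempting to supply an argument where the paper only cites one. Your overall strategy --- contradiction at a strict local discrete extremum, nodal expansion of $\cA^{SIP}_K(v_h,\phi_i^K)$ via partition of unity, absorption of the sign-indefinite stiffness and consistency terms into the jump and gradient quantities, then chaining through overlapping patches --- is indeed the framework of \cite{badia2015discrete}, so in spirit you are on the route the paper points to.

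There is, however, a genuine gap at the decisive step. Your absorption argument can only yield a one-sided, \emph{non-strict} bound of the form $\cA^{SIP}_K(v_h,\phi_i^K) \geq -C\big(\sum_{x_i\in e}h_e^{-1}\int_e|\sjump{v_h}| + h_K^{-1}\int_K|\nabla v_h^K|\big)$ at a local minimum, with $C$ depending on shape regularity and $\eta$. This does not contradict the (also non-strict) hypothesis $\cA^{SIP}_K(v_h,\phi_i^K) \leq -\sum_{x_i\in e}\Gamma_e h_e^{-1}\int_e|\sjump{v_h}| - \delta_K h_K^{-1}\int_K|\nabla v_h^K|$; your claim that the two are ``strictly incompatible'' is asserted but never derived. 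Strictness cannot come from the interior term $\sum_{p\neq i}\big(v_h(p)-v_h(x_i)\big)\int_K\nabla\phi_p^K\cdot\nabla\phi_i^K$, because without mesh restrictions those stiffness entries have no sign --- that is exactly the M-matrix mechanism you correctly identify as unavailable. The missing idea is what to do with the forced near-equality: provided $\Gamma_e$ and $\delta_K$ strictly exceed the absorption constant $C$, combining the two inequalities forces $\int_K|\nabla v_h^K| = 0$ and $\int_e|\sjump{v_h}| = 0$ on the edges meeting $x_i$, i.e., $v_h$ is constant on $K$ and continuous across those edges; it is this local constancy that contradicts strictness of the extremum, and it is also what drives the patch-chaining in your final paragraph, transporting the extreme value either to a node of $\mathcal{J}^*\setminus\mathcal{J}_*$ (giving $M$) or, through boundary edges where $\sjump{v_h}=v_h n^e$ compares $v_h$ against the weakly imposed zero datum, to the value $0$ in $\max\{0,M\}$. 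Without this vanishing/constancy step, neither the no-strict-extremum claim nor the boundary bound closes.
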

\begin{remark} \label{ourdmp}
	In our analysis, we will be using the following particular form of the discrete maximum principle (cf.~Lemma \ref{discmax}). If
	\begin{align*}
	\cA^{SIP}_K (v_h, \phi_{i}^K) \leq~ 0~ \forall~i \in \mathcal{I}:= \{1,2, \cdots, N\},
	\end{align*}
	(resp., $\cA^{SIP}_K (v_h, \phi_{i}^K) \geq 0 \quad i \in \mathcal{I}= \{1,2, \cdots, N\}$), then $v_h$ has no strict local discrete minimum (resp., maximum) at any interior corner point.
\end{remark}
Next, for given $z \in L^2(\Omega)$, we discuss the improved $L^2$-error estimates for the obstacle problem \eqref{eq:C4}--\eqref{eq:C5}.

\begin{theorem} \label{L2er}
	Let the assumptions of Theorem \ref{asp1} and Proposition~\ref{prop:ctrl_reg} hold and additionally $g \in W^{2, \infty}(\Omega)$ holds. 
	Let $u$ and $u_h$ be the solutions of \eqref{eq:C4}--\eqref{eq:C5} and \eqref{disc1}--\eqref{disc2}, respectively. For any $\Omega_0 \subset \subset \Omega$, the following error estimate holds:
	\begin{align} \label{imL2}
	\|S(z)-S_h(z_h)\|_{L^2(\Omega_0)} &=\|u-u_h\|_{L^2(\Omega_0)} \notag \\ &\leq C h^{1+\beta} \big(\|u\|_{W^{2,p}(\Omega)} + \|g\|_{W^{2,\infty}(\Omega)} \big),
	\end{align}
	where $\beta:= 1-\frac{d}{p}$ and the positive constant $C$ is independent of $h$ and the penalty parameter $\eta >0$. Further, let $0< \epsilon< \frac{d}{2p}$, then \eqref{imL2} implies
	\begin{align} \label{impL2}
	\|u-u_h\|_{L^2(\Omega_0)} \leq C h^{2-2\epsilon} \big(\|u\|_{W^{2,p}(\Omega)}+ \|g\|_{W^{2,\infty}(\Omega)} \big).
	\end{align}
\end{theorem}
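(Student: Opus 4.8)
The plan is to establish \eqref{imL2} through a pointwise (local $L^\infty$) comparison of $u$ and $u_h$, and then to integrate over $\Omega_0$; the estimate \eqref{impL2} follows at once, since $u \in W^{2,p}(\Omega)$ for every finite $p$ (Theorem~\ref{asp1}(c) together with Proposition~\ref{prop:ctrl_reg}), so choosing $p$ large enough that $d/p \le 2\epsilon$ turns $h^{1+\beta}=h^{2-d/p}$ into a bound by $h^{2-2\epsilon}$ for $h\le 1$. The substance of the theorem is therefore the pointwise estimate, and this is where I follow the Nitsche-type idea flagged in Remark~\ref{rem1}, replacing his $W^{2,\infty}$ hypothesis by $W^{2,p}$ regularity and his continuous maximum principle by the discrete one of Lemma~\ref{discmax}.

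First I would split the error using the SIPG elliptic projection $R_h u \in \cV_h$ of the continuous state, defined by $\xA^{SIP}(R_h u, v_h) = \xA^{SIP}(u, v_h)$ for all $v_h \in \cV_h$ (the right-hand side being well defined by consistency of the symmetric interior penalty form on $H^2(\Omega)\cap\cV$, where jumps and boundary traces of $u$ vanish), and write $u - u_h = (u - R_h u) + (R_h u - u_h)$. The first term is a purely linear (Poisson-type) quantity: invoking the interior $L^\infty$-error estimate for the SIPG discretization of the Poisson equation recalled in this section, together with the $W^{2,p}$ interpolation bound $\|u - I_h u\|_{L^\infty(\Omega_0)} \lesssim h^{2-d/p}\|u\|_{W^{2,p}(\Omega)}$ (valid because $p>d/2$ gives $W^{2,p}\hookrightarrow C^0$), yields $\|u - R_h u\|_{L^\infty(\Omega_0)} \lesssim h^{1+\beta}\|u\|_{W^{2,p}(\Omega)}$ with $\beta = 1 - \tfrac{d}{p}$. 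This is exactly where the exponent $1+\beta$ originates.

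The core of the argument is the discrete term $e_h := R_h u - u_h \in \cV_h$, which I would control by the discrete maximum principle in the convenient form of Remark~\ref{ourdmp}. The idea is to show that, on every element $K$ and at every interior node, the quantity $\cA^{SIP}_K(e_h, \phi_i^K)$ carries a definite sign up to a residual that is itself $O(h^{1+\beta})$; this sign is produced by combining the discrete state equation \eqref{disc1}, the nodal positivity of the discrete multiplier ($\zeta_h \ge 0$ on each $K$) and the complementarity relation \eqref{disc2} with their continuous counterparts \eqref{eq:C4}--\eqref{eq:C5} ($\zeta \ge 0$ and $\langle \zeta, u-g\rangle_{-1,1}=0$). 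Remark~\ref{ourdmp} then forbids a strict interior extremum of $e_h$, so its extreme values migrate to the boundary of the relevant patch $\omega_i$ and to the discrete free boundary. There the nodal obstacle conditions ($u = g$ on the continuous contact set, $u_h(p) \ge g_h(p)$ from the discrete constraint), together with $g \in W^{2,\infty}(\Omega)$ and the nodal interpolation error of $g$, bound the boundary values of $e_h$ by $C h^{1+\beta}\big(\|u\|_{W^{2,p}(\Omega)} + \|g\|_{W^{2,\infty}(\Omega)}\big)$. Applying the principle to both $e_h$ and $-e_h$ gives $\|e_h\|_{L^\infty(\Omega_0)} \lesssim h^{1+\beta}\big(\|u\|_{W^{2,p}(\Omega)} + \|g\|_{W^{2,\infty}(\Omega)}\big)$, and \eqref{imL2} follows from $\|u-u_h\|_{L^2(\Omega_0)} \le |\Omega_0|^{1/2}\|u-u_h\|_{L^\infty(\Omega_0)}$.

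The main obstacle I anticipate is the sign/residual analysis of the previous paragraph: manufacturing the correct sign of $\cA^{SIP}_K(e_h,\phi_i^K)$ uniformly across the contact and non-contact regions and across the discrete free boundary, precisely where the solution loses smoothness (only $W^{2,p}$, not $W^{2,\infty}$) and where the discrete and continuous active sets need not coincide. In particular, verifying that the full DMP hypotheses of Lemma~\ref{discmax} hold for $e_h$ — i.e.\ that the penalty/jump contributions $\Gamma_e h_e^{-1}\int_e|\sjump{v_h}|$ and $\delta_K h_K^{-1}\int_K |\nabla v_h^K|$ have the required sign and do not spoil the inequality — and controlling the residual on the narrow band of elements straddling the free boundary, so that it remains $O(h^{1+\beta})$ and does not degrade the rate, will require the most care.
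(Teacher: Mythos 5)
Your proposal follows essentially the same route as the paper's proof: the same splitting via the SIPG elliptic projection (your $R_h u$ is exactly the paper's $Pu$, defined by $\cA^{SIP}(Pu,v_h) = -\int_{\Omega}\Delta u\, v_h\,dx$), the same $L^{\infty}$/duality estimates for the discrete Poisson problem producing the $h^{2-\frac{d}{p}}$ rate, the same DMP treatment of the discrete part, the same free-boundary bounds combining $u_h(p)\geq g_h(p)$, the H\"older regularity $u,g \in C^{1,\beta}$, and the interpolation error of $g$, and the same final passage from $L^{\infty}(\Omega_0)$ to $L^{2}(\Omega_0)$. The sign analysis you flag as the main open difficulty is resolved in the paper precisely by the index-set splitting you would need: at interior nodes whose patches meet the continuous active set the bound follows directly from the nodal constraint and $\|Pu-u\|_{L^{\infty}}$, while at nodes away from it one has $\zeta \equiv 0$, hence $-\Delta u = z$ there and $\cA^{SIP}_K(Pu-u_h,\phi^K_i) \leq 0$ exactly (no residual), so only the simplified form of the DMP (Remark \ref{ourdmp}) is invoked and the penalty-term hypotheses of Lemma \ref{discmax} never need to be checked.
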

	\begin{proof}
	We note that
	\begin{align}
	\|u-u_h\|_{L^2(\Omega_0)} \leq \|u-Pu\|_{L^2(\Omega_0)} + \|Pu-u_h\|_{L^2(\Omega_0)}, \label{reqest}
	\end{align}
	where $P : H^2(\Omega) \cap H^1_0(\Omega) \rightarrow \cV_h$ is a projection operator which is defined as the unique solution to the following equation
	\begin{align} \label{proj}
	\cA^{SIP}(Pu,v_h) = - \int_{\Omega} \Delta u v_h~ dx \quad \forall~v_h \in \cV_h.
	\end{align}
	The operator $Pu \in \cV_h$ is indeed the SIPG finite element approximation to $u$ which solves the following Poisson equation
		\begin{align} \label{poi}
		a(u, v) = \int_{\Omega} w v ~dx \quad \forall~ v \in \cV,
		\end{align}
		where $w = -\Delta u$. 
	Using the standard duality arguments (see \cite[Section 10.5]{BScott:2008:FEM}) and the ideas in the article \cite[Theorem 5.1]{chen2004pointwise}, we have the following $L^2$-error estimate and maximum $(L^{\infty}$-norm) error estimate for the Poisson equation, i.e.,
	\begin{align}
	\|u-Pu\|_{L^{\infty}(\Omega)} &\leq Ch \inf_{v_h \in \cV_h} \| u-v_h\|_{W^{1,\infty}(\Omega)} \leq C h^{2-\frac{d}{p}} \|u\|_{W^{2,p}(\Omega)}\, , \ 1 \leq p < \infty, \label{Linf}\\ \|u-Pu\|_{L^{2}(\Omega)} &\leq Ch^2(\eta + \eta^{-5}), \label{L^2}
	\end{align}
	where $\eta$ is the penalty parameter defined in the $\cA^{SIP}(\cdot, \cdot)$ (equation \eqref{DP12}) and the positive constant $C$ (in \eqref{Linf} and \eqref{L^2}) is independent of $h$. Next, we employ the discrete maximum principle (Lemma \ref{discmax}) to deal with the second error term $\|Pu-u_h\|_{L^2(\Omega_0)} $ (in \eqref{reqest}). We will proceed in few steps\\
	
	\textbf{Step 1.}
	$\bullet$	Let $K \in \cT_h$ be a fixed element such that $K \cap \Omega_0 \neq \emptyset$ and note that both $Pu$ and $u_h$ belong to $\cV_h$. For $i \in \{1,2,3\}$, let $Pu^K_i$ and $u^K_{h,i}$ denote the components of the coefficient vectors $Pu^K$ and $u_h^K$, respectively. Next, we estimate the following differences
	\begin{subequations}
	\begin{align}
	Pu^K_i-u^K_{h,i} \quad \text{and} \quad u^K_{h,i}- Pu^K_i \quad \text{where } K \in \cT_h,
	\end{align}
	where $u^K_{h,i}= u_h^K(x_i)$ and $\{x_i\}_{i=1}^3$ are the vertices of $K$. Note that, we are interested in the maximum error estimates around the set $\Omega_0 \subset \subset \Omega$. Let us denote the continuous active set as $\cQ:= \{x \in \Omega~:~u(x)= g(x)\}$. Let $\{x_i\}^n_{i=1}$ and $\{x_i\}^m_{i=n+1}$ be the set of all interior and boundary corner points of the triangulization associated with $\Omega_0$, respectively. Next, we introduce the index sets
	\begin{align}
	&\cQ_1:= \{i \in \{1,2, \cdots, n\}~:~\omega_i \cap \cQ \neq \phi\},  \\ & \cL_1:= \{i \in \{1,2, \cdots, N\}~:~\omega_i \cap \cQ \neq \phi\},\\ &\cQ_2:= \{i \in \{1,2, \cdots, n\}~:~u^K_h(z_i) = g^K_h(z_i) \}, \\ &\cL_2:= \{i \in \{1,2, \cdots, N\}~:~u^{K}_h(z_i) = g^{K}_h(z_i) \},\\ &\cH_1:= \{1,2, \cdots, n\} \setminus \cQ_1 \quad \text{;} \quad \cH_2:= \{1,2, \cdots, n\} \setminus \cQ_2,  \\ &\mathcal{F}_1:= \{1,2, \cdots, N\} \setminus \cL_1 \quad \text{;} \quad \mathcal{F}_2:= \{1,2, \cdots, N\} \setminus \cL_2, \\ &\cQ_{h,1}:= \bigcup\limits_{i \in \cQ_1} \omega_i  \quad 
	\text{and} \quad \cQ_{h,2}:= \bigcup\limits_{i \in \cQ_2} \omega_i, \\  &\cL_{h,1}:= \bigcup\limits_{i \in \cL_1} \omega_i  \quad 
	\text{and} \quad \cL_{h,2}:= \bigcup\limits_{i \in \cL_2} \omega_i.
	\end{align}
	\end{subequations}
	
	\textbf{Step 2.}
	$\bullet$
	Next, we prove the following estimate for all $i \in \{1,2,\cdots,n,n+1,\cdots,m\},$ (i.e., the interior and boundary corner points of $\Omega_0$)
	\begin{align}
	Pu_i^K-u^K_{h,i} \leq \|Pu-u\|_{L^{\infty}(\cL_{h,1})} + \underset{i \in \cL_1}{\max} |u(x_i)- g_h(x_i)|.
	\end{align}
	Note that $\cQ_1 \cup \cH_1 \cup \{n+1, \cdots,m\}:=\{1,2,\cdots,n,n+1,\cdots,m\}$. In view of $\Omega_0 \subset \subset \Omega$, then observe that if $i \in \{n+1, \cdots,m\}$ then $i \in \{1,2,\cdots,N\}$ and the proof follows on the similar lines as in \cite[Lemma A.2 and A.3]{Meyer:2013:OCPOB}. Next, let $i \in \cQ_1$ which implies $u^K_h(x_i) \geq g^K_h(x_i)$ and $K \in \omega_i$ such that
	\begin{align*}
	Pu^K_i-u^K_{h,i} &= Pu^K_i-u(x_i)+u(x_i)-u^K_{h,i} \\ & \leq \|Pu-u\|_{L^{\infty}(\cQ_{h,1})} + u(x_i)-g^K_h(x_i) \\ & \leq \|Pu-u\|_{L^{\infty}(\cL_{h,1})} + \underset{i \in \cL_1}{\max} |u(x_i)- g_h(x_i)|.
	\end{align*}
	Now, let $i \in \cH_1$, i.e., $i \in \{1,2,\cdots,n\}$ and $i \notin \cQ_1$, then we have $u(x) - g(x) >0~~\forall~ x \in K \in \omega_i$ and using \eqref{eq:C5}, we have $\zeta_{K,i}(x)=0$. For $K \in \omega_i$ and with the help of \eqref{proj} and \eqref{eq:C4}, it holds that
	\begin{align*}
	0= \int_{K} (-\Delta u - z) \phi^K_{i} dx &= \int_{K} -\Delta u  \phi^K_{i} dx - \mathbf{\xA}^{SIP}_K(u_h,\phi^K_{i} ) + \zeta_{K,i} \\ &= \cA^{SIP}_K(Pu,\phi^K_{i}) - \mathbf{\xA}^{SIP}_K(u_h,\phi^K_{i} ) + \zeta_{K,i} \\ & \geq  \cA^{SIP}_K(Pu,\phi^K_{i}) - \mathbf{\xA}^{SIP}_K(u_h,\phi^K_{i} ) \\ & = \cA^{SIP}_K(Pu-u_h,\phi^K_{i})  \quad \forall i~ \in \cH_1.
	\end{align*}
	Using Remark \ref{ourdmp} and $\cH_1=(\cH_1)_*$, we deduce
	\begin{align}
	Pu^K_i-u^K_{h,i}  &\leq \max \bigg\{0, \underset{i \in \cH_1^*\setminus (\cH_1)_{*}}{\max} (Pu^K_i-u^K_{h,i} ) \bigg\} \\ & =\max \bigg\{0, \underset{i \in \cH_1^*\setminus \cH_1}{\max} (Pu^K_i-u^K_{h,i} ) \bigg\} \notag \\ & \leq  \underset{i \in \cL_1}{\max} (Pu^K_i-u^K_{h,i})  \hspace*{3cm}\qquad (\text{using } \cH_1^*\setminus \cH_1 \subseteq \cL_1) \\ & \leq \|Pu-u\|_{L^{\infty}(\cL_{h,1})} + \underset{i \in \cL_1}{\max} |u(x_i)- g_h(x_i)|. \label{eq}
	\end{align}
	Finally, we have the estimate for $Pu^K_i-u^K_{h,i}$.

	\textbf{Step 3.}
	$\bullet$ Using the similar arguments in Step 2 along with definitions of the sets $\cQ_2$ and $\cL_2$, we have the following estimate for $ u^K_{h,i}- Pu^K_i$
	\begin{align*}
	u^K_{h,i}- Pu^K_i  \leq \|u-Pu\|_{L^{\infty}(\cL_{h,2})} + \|g_h-g\|_{L^{\infty}(\cL_{h,2})}.
	\end{align*}
	
	\textbf{Step 4.} $\bullet$ Using Steps 1, 2 and 3, we conclude
	\begin{align}
	\|Pu-u_h\|_{L^{\infty}(\Omega_0)} \leq C \bigg( \|u-Pu\|_{L^{\infty}(\cL_{h,2} \cup \cL_{h,1})} + \|g_h-g\|_{L^{\infty}(\cL_{h,2})} \notag \\ + \underset{i \in \cL_1}{\max} |u(x_i)- g_h(x_i)| \bigg). \label{res1}
	\end{align}

	Lastly, we bound the third term on the right side of \eqref{res1}. Let $m \in \{0,1\}$ and $u, g \in C^{m, \beta}(\cL_{h,1})$. 
	Notice that this H\"older regularity follows from the respective regularity on $u$ and $g$. Since $j \in \cL_1$, there exists 
	a $y \in \omega_j \cap \cQ$ such 
	that $|x_j - x'| \leq Ch$ and $u(y)=g(y)$. Next, we show the following estimate for $m \in \{0,1\}$:
	\begin{align} \label{b3}
	0 \leq u(x_j)- g_h(x_j) \leq C h ^{m + \beta} \bigg( \|u\|_{C^{m, \beta}(\cL_{h,1})} + \|g\|_{C^{m, \beta}(\cL_{h,1})}  \bigg) +  \|g_h-g\|_{L^{\infty}(\cL_{h,1})},
	\end{align}
	where $C$ is a positive constant independent of $h$. For $m=0$, 
	\begin{align}
	u(x_j)- g_h(x_j) & = u(x_j)-u(y)+u(y)-g(x_j) + g(x_j)- g_h(x_j) \notag \\ & =  (u(x_j)-u(y))+(g(y)-g(x_j)) + (g(x_j)- g_h(x_j)) \notag\\ & \leq \big( \|u\|_{C^{0, \beta}(\cL_{h,1})} + \|g\|_{C^{0, \beta}(\cL_{h,1})}  \big)|z_j-y|^{\beta} + \|g_h-g\|_{L^{\infty}(\cL_{h,1})}. \label{b1}
	\end{align}
	Next, for $m=1$, we employ the same ideas as in articles \cite[Lemma A.5]{Meyer:2013:OCPOB} and \cite[Step 4, Pg. 660]{dond2016nonconforming} to conclude
	\begin{align}
	u(x_j)- g_h(x_j) \leq C h^{1 +\beta}\big( \|u\|_{C^{1, \beta}(\cL_{h,1})} + \|g\|_{C^{1, \beta}(\cL_{h,1})}  \big). \label{b2}
	\end{align}
	Using the following standard interpolation estimate \cite{BScott:2008:FEM}
	\begin{align*}
	\|g_h-g\|_{L^{\infty}(\cL_{h,1})} \leq C h^2\|g\|_{W^{2,\infty}(\Omega)}
	\end{align*}
	together with Steps 1, 2, 3 and 4 and inserting the Sobolov embedding $W^{2,p}(\Omega) \hookrightarrow C^{1, \beta}(\Omega)$ for $\beta= 1-\frac{d}{p}$ (or with equations \eqref{reqest}, \eqref{Linf}, \eqref{L^2}, \eqref{eq}, \eqref{res1} and \eqref{b3}), we have the proof of Theorem \ref{L2er}.
\end{proof}
\subsection{Error estimates for the state and control: global matching}
For the error estimates for the control and state in the global matching case 
($\widehat{\Omega} = \Omega$ in \eqref{problem1}), 
we first state the existence of a sequence of discrete optimal solutions to \eqref{disocp}, say $\{\tilde{z}_h\}_{h>0}$ such that $\{\tilde{z}_h\}_{h>0} \rightarrow \tilde{z}$ in $L^2(\Omega)$. We skip the proof for the sake of brevity, as it follows along the lines of \cite[Lemma 5.5]{Meyer:2013:OCPOB}.
\begin{lemma} \label{Seq}
	Let $\tilde{z} \in L^2(\Omega)$ satisfies \eqref{grpp1} (quadratic growth property) and Theorem \ref{main:thm1} holds. Then, we have existence of a sequence $\{\tilde{z}_h\}_{h>0}$ of the local optimal solutions to \eqref{disocp} which converge to $\tilde{z}$ in $L^2(\Omega)$ as $h \rightarrow 0$.
\end{lemma}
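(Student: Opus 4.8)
The plan is to follow the classical localization strategy for optimal control problems enjoying a quadratic growth condition (as in \cite{Meyer:2013:OCPOB}), adapted to the DG discretization. First I would introduce the \emph{auxiliary constrained discrete problems}
\[
(\mathbb{P}_h): \qquad \min_{z \in \overline{B_\gamma(\tilde z)}} F_h(z),
\]
where $\overline{B_\gamma(\tilde z)}$ is the closed ball from Proposition~\ref{grpp}. Since $F_h$ is continuous and coercive on $L^2(\Omega)$ (the term $\tfrac{\nu}{2}\|z\|^2_{L^2(\Omega)}$ dominates) and $\overline{B_\gamma(\tilde z)}$ is bounded, closed and convex, hence weakly sequentially compact, the direct method yields for each $h>0$ a global minimizer $\tilde z_h \in \overline{B_\gamma(\tilde z)}$ of $(\mathbb{P}_h)$. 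These constrained minimizers are the candidates; the bulk of the argument is to show $\tilde z_h \to \tilde z$ strongly and that, for $h$ small, $\tilde z_h$ is an interior point, so that the artificial constraint is inactive.

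The heart of the proof, and the step I expect to be the main obstacle, is a \emph{double-limit consistency} statement: if $\tilde z_h \rightharpoonup \bar z$ weakly in $L^2(\Omega)$ along a subsequence, then $S_h(\tilde z_h) \to S(\bar z)$ \emph{strongly} in $L^2(\Omega)$. I would split
\[
\|S_h(\tilde z_h) - S(\bar z)\|_{L^2(\Omega)} \leq \|S_h(\tilde z_h) - S(\tilde z_h)\|_{L^2(\Omega)} + \|S(\tilde z_h) - S(\bar z)\|_{L^2(\Omega)}.
\]
The second term tends to zero by complete continuity of the continuous control-to-state map: $\{\tilde z_h\}$ is bounded, so by the Lipschitz bound of Remark~\ref{rem:1} the states $S(\tilde z_h)$ are bounded in $H^1_0(\Omega)$, hence precompact in $L^2(\Omega)$, and a standard monotonicity (Minty) passage in the variational inequality \eqref{obs}--\eqref{obs1} identifies every $L^2$-limit point as $S(\bar z)$. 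The first term is controlled \emph{uniformly in the control} by the a priori estimate \eqref{err2}: applying Theorem~\ref{main:thm1} together with Lemma~\ref{normeq} with control $\tilde z_h$ gives $\|S_h(\tilde z_h)-S(\tilde z_h)\|_{L^2(\Omega)} \lesssim h\,(\|S(\tilde z_h)\|_{H^2(\Omega)}+\|g\|_{H^2(\Omega)})$, and the $H^2$-regularity bound of Theorem~\ref{asp1}(b) makes $\|S(\tilde z_h)\|_{H^2(\Omega)}$ uniformly bounded in $h$ because $\{\tilde z_h\}$ is bounded in $L^2(\Omega)$; hence this term is $O(h)\to 0$. It is this simultaneous control of the discretization error (uniform in the control) and of the weak-to-strong continuity of $S$ that is delicate.

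With the consistency claim in hand, I would pass to the limit in $F_h(\tilde z_h)$. Weak lower semicontinuity of $\tfrac{\nu}{2}\|\cdot\|_{L^2(\Omega)}^2$ together with $S_h(\tilde z_h)\to S(\bar z)$ in $L^2(\Omega)$ gives the $\liminf$ inequality $F(\bar z)\le \liminf_{h\to0} F_h(\tilde z_h)$. Conversely, since $\tilde z$ itself is admissible for $(\mathbb{P}_h)$, minimality gives $F_h(\tilde z_h)\le F_h(\tilde z)$, and the consistency claim applied to the constant sequence $\tilde z$ yields $F_h(\tilde z)\to F(\tilde z)$; hence $\limsup_{h\to0} F_h(\tilde z_h)\le F(\tilde z)$. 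Chaining these with $\bar z \in \overline{B_\gamma(\tilde z)}$ and the quadratic growth \eqref{grpp1} (which extends to $\overline{B_\gamma(\tilde z)}$ by continuity of $F$), namely $F(\tilde z)\le F(\bar z)-\kappa\|\bar z-\tilde z\|_{L^2(\Omega)}^2$, forces
\[
F(\tilde z) \le F(\bar z) - \kappa\|\bar z - \tilde z\|_{L^2(\Omega)}^2 \le F(\tilde z) - \kappa\|\bar z - \tilde z\|_{L^2(\Omega)}^2,
\]
so $\bar z = \tilde z$; by uniqueness of the limit the whole family satisfies $\tilde z_h \rightharpoonup \tilde z$. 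Moreover all the inequalities collapse to equalities, whence $F_h(\tilde z_h)\to F(\tilde z)$; since the state part $\tfrac12\|S_h(\tilde z_h)-u_d\|^2_{L^2(\Omega)}\to \tfrac12\|S(\tilde z)-u_d\|^2_{L^2(\Omega)}$, I deduce $\|\tilde z_h\|_{L^2(\Omega)}\to\|\tilde z\|_{L^2(\Omega)}$. Norm convergence together with weak convergence in the Hilbert space $L^2(\Omega)$ (Radon--Riesz) upgrades this to $\tilde z_h\to\tilde z$ strongly.

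Finally, I would remove the artificial localization. Because $\tilde z_h\to\tilde z$ strongly and $\tilde z$ is the \emph{center} of $B_\gamma(\tilde z)$, for all sufficiently small $h$ one has $\|\tilde z_h-\tilde z\|_{L^2(\Omega)}<\gamma$, so $\tilde z_h$ lies in the interior of the admissible ball. A minimizer of $F_h$ over $\overline{B_\gamma(\tilde z)}$ that is interior is automatically a local minimizer of $F_h$ on a full $L^2(\Omega)$-neighborhood, i.e., a local optimal solution of the discrete problem \eqref{disocp}. This produces the desired sequence $\{\tilde z_h\}_{h>0}$ of discrete local optima converging to $\tilde z$ in $L^2(\Omega)$, completing the proof of Lemma~\ref{Seq}.
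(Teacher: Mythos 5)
Your proposal is correct and follows essentially the same route the paper takes: the paper omits the proof, citing \cite[Lemma 5.5]{Meyer:2013:OCPOB}, and constructs $\{\tilde{z}_h\}_{h>0}$ exactly as you do, via the ball-constrained auxiliary problem \eqref{problem3} over $B_{\gamma}(\tilde{z})$, followed by the standard consistency/quadratic-growth/Radon--Riesz localization argument. Your write-up simply makes explicit the steps (uniform discretization error via $H^2$-regularity, complete continuity of $S$, weak-to-strong upgrade, and inactivity of the artificial constraint for small $h$) that the cited reference carries out.
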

The sequence $\{\tilde{z}_h\}_{h>0}$ in Lemma \ref{Seq} is generated by standard arguments \cite{casas2002error,Meyer:2013:OCPOB} through solving the following auxiliary problem
\begin{equation} \label{problem3}
\begin{cases}
\min \quad F_h (z):= \frac{1}{2} \|S_h(z)-u_d\|^2_{L^2(\Omega)}+ \frac{\nu}{2} \|z\|^2_{L^2(\Omega)}, & \\  \text{ such that } \quad z \in B_{\gamma}(\tilde{z}),
\end{cases}
\end{equation}
where $B_{\gamma}(\tilde{z})$ is defined in Proposition \ref{grpp}. 
Next, we state and prove the control error estimate in the setting under consideration.
\begin{theorem} \label{main}
	Let $\tilde{z}$ satisfies Lemma \ref{Seq} and the error estimate \eqref{err1} holds. Then, there exists a constant $C>0$, independent of $h$, such that
		\begin{align}
		\|\tilde{z}-\tilde{z}_h\|_{L^2(\Omega)} & \leq C \sqrt{h(1+h)}, \label{contest}
		\end{align}
	where $h$ is sufficiently small and $\tilde{z}$ is the limit of a sequence $\{\tilde{z}_h\}_{h>0}$ of local solutions to \eqref{disocp}.
\end{theorem}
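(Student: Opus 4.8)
The plan is to use the quadratic growth property (Proposition~\ref{grpp}) as the main lever and to control the resulting functional gap by the energy, hence $L^2$, error for the obstacle problem (Theorem~\ref{main:thm1} together with Lemma~\ref{normeq}). First I would record that, by Lemma~\ref{Seq}, $\tilde z_h \to \tilde z$ in $L^2(\Omega)$, so for $h$ sufficiently small $\tilde z_h \in B_\gamma(\tilde z)$ and \eqref{grpp1} applies with $z = \tilde z_h$, giving
\begin{align*}
\kappa \,\|\tilde z - \tilde z_h\|_{L^2(\Omega)}^2 \;\le\; F(\tilde z_h) - F(\tilde z).
\end{align*}
Thus it suffices to bound the functional gap $F(\tilde z_h) - F(\tilde z)$ by $C\,h(1+h)$ and take square roots.

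Next I would insert the discrete functional $F_h$ and exploit that $\tilde z_h$ is a local minimizer of $F_h$ over $B_\gamma(\tilde z)$, i.e.\ the auxiliary problem \eqref{problem3}, so that $F_h(\tilde z_h) \le F_h(\tilde z)$. Writing
\begin{align*}
F(\tilde z_h) - F(\tilde z) \;=\; \big[F(\tilde z_h) - F_h(\tilde z_h)\big] + \big[F_h(\tilde z_h) - F_h(\tilde z)\big] + \big[F_h(\tilde z) - F(\tilde z)\big],
\end{align*}
the middle bracket is nonpositive, so the gap is controlled by the two consistency terms $F(z) - F_h(z)$ evaluated at $z \in \{\tilde z, \tilde z_h\}$.

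Because the Tikhonov terms $\tfrac{\nu}{2}\|z\|_{L^2(\Omega)}^2$ cancel in $F - F_h$, each consistency term is a difference of squared $L^2$-data misfits, which I would expand by polarization,
\begin{align*}
\|S(z)-u_d\|_{L^2(\Omega)}^2 - \|S_h(z)-u_d\|_{L^2(\Omega)}^2 \;=\; \big( S(z)-S_h(z),\; S(z)+S_h(z)-2u_d \big),
\end{align*}
and then estimate by Cauchy--Schwarz. The first factor is precisely the obstacle-problem state error $\|S(z)-S_h(z)\|_{L^2(\Omega)} \le C h$, obtained from \eqref{err1} via Lemma~\ref{normeq}; the second factor is bounded by $2\|S(z)-u_d\|_{L^2(\Omega)} + \|S_h(z)-S(z)\|_{L^2(\Omega)} \le C(1+h)$, and this is exactly where the spurious $(1+h)$ enters. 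Applying the bound at $z=\tilde z$ and $z=\tilde z_h$ yields $F(\tilde z_h) - F(\tilde z) \le C\,h(1+h)$, and combining with the quadratic-growth inequality above gives $\|\tilde z - \tilde z_h\|_{L^2(\Omega)} \le C\sqrt{h(1+h)}$.

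The main obstacle, in my view, is ensuring that the constant in $\|S(z)-S_h(z)\|_{L^2(\Omega)} \le Ch$ is uniform over $z \in B_\gamma(\tilde z)$: Theorem~\ref{main:thm1} is phrased for a \emph{fixed} control, so I must check that the regularity bound $\|S(z)\|_{H^2(\Omega)}$ stays uniformly bounded on the ball, which follows from the $H^2$-regularity in Theorem~\ref{asp1}(b) together with the Lipschitz continuity of $S$. A secondary, more routine point is verifying $\tilde z_h \in B_\gamma(\tilde z)$ for small $h$, which is exactly the content of Lemma~\ref{Seq}.
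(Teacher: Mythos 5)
Your proposal is correct and follows essentially the same route as the paper: quadratic growth (Proposition~\ref{grpp}) combined with $F_h(\tilde z_h)\le F_h(\tilde z)$ from Lemma~\ref{Seq} reduces everything to the two consistency terms $|F(z)-F_h(z)|$ at $z\in\{\tilde z,\tilde z_h\}$, which are then bounded via the $O(h)$ $L^2$ state error obtained from \eqref{err1} and Lemma~\ref{normeq}. The only difference is bookkeeping: your polarization identity is algebraically the paper's add-and-subtract expansion, and where you secure uniformity of the constants by bounding $\|S(z)\|_{H^2(\Omega)}$ over the ball $B_\gamma(\tilde z)$, the paper instead derives uniform boundedness of $\tilde z_h$ (hence of $\|S(\tilde z_h)-u_d\|_{L^2(\Omega)}$) from the discrete optimality system \eqref{1.8c}--\eqref{1.8e}; both are routine and valid.
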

\begin{proof}
	In view of Lemma \ref{Seq}, we have existence of sequence of local solutions to \eqref{disocp} converging strongly to $\tilde{z}$ in $L^2(\Omega)$. 
		and hence, for sufficiently small choices of $h>0 $ and $\gamma$, $\tilde{z}_h$ satisfies
	\begin{align}
	F_h(\tilde{z}_h) \leq F_h(\tilde{z}) \quad\text{and} \quad \tilde{z}_h \in B_{\gamma}(\tilde{z}). \label{est}
	\end{align}
	The quadratic growth property (Proposition \ref{grpp}) and \eqref{est} yield the following
	\begin{align}
	F(\tilde{z}) & \leq F(\tilde{z}_h) - \kappa\|\tilde{z}_h-\tilde{z}\|^2_{L^2(\Omega)}  \implies \kappa \|\tilde{z}_h-\tilde{z}\|^2_{L^2(\Omega)}  \leq F(\tilde{z}_h) -F(\tilde{z}) \label{eq11} \\ \implies & \kappa \|\tilde{z}_h-\tilde{z}\|^2_{L^2(\Omega)} \leq F(\tilde{z}_h)- F_h(\tilde{z}_h) +  F_h(\tilde{z}_h) + F_h(\tilde{z})-F_h(\tilde{z}) -F(\tilde{z}) \notag \\  & \hspace*{1.8cm} \leq |F(\tilde{z}_h)- F_h(\tilde{z}_h) | + |F_h(\tilde{z}) -F(\tilde{z})|. \label{est1}
	\end{align}
	First, we bound the term $|F(\tilde{z}_h)- F_h(\tilde{z}_h) | $ using the definition \eqref{disocp} and exploit the error bound for term $\|S_h(\tilde{z}_h)-S(\tilde{z}_h)\|^2_{L^2(\Omega)}$ using \eqref{err1} and Remark \ref{rem1} (by taking right hand side as $\tilde{z}_h$ in the elliptic variational equality of Problem \eqref{problem1}).
	\begin{align}
	|F(\tilde{z}_h)- F_h(\tilde{z}_h) | & = \frac{1}{2} \bigg|\|S(\tilde{z}_h)-u_d\|^2_{L^2(\Omega)} - \|S_h(\tilde{z}_h)-u_d\|^2_{L^2(\Omega)} \bigg| \notag  \\ &  \hspace*{-0.8cm}= \frac{1}{2} \bigg|\|S(\tilde{z}_h)-u_d\|^2_{L^2(\Omega)} - \|S_h(\tilde{z}_h)-S(\tilde{z}_h) + S(\tilde{z}_h)-u_d\|^2_{L^2(\Omega)} \bigg| \notag \\ & \hspace*{-0.8cm} \leq \frac{1}{2} \|S_h(\tilde{z}_h)-S(\tilde{z}_h)\|^2_{L^2(\Omega)}  + \|S_h(\tilde{z}_h)-S(\tilde{z}_h)\|_{L^2(\Omega)}\|S(\tilde{z}_h)-u_d\|_{L^2(\Omega)}  \notag \\ & \hspace*{-0.8cm} \leq Ch^{2} + Ch\|S(\tilde{z}_h)-u_d\|_{L^2(\Omega)}. \label{laes}
	\end{align}
	Next, we prove the uniform boundedness of $\tilde{z}_h~\forall ~h>0$. Using \eqref{1.8c}, \eqref{1.8d}, \eqref{1.8e} and the coercivity of $\cA^{SIP}$, it holds that
	\begin{align} \label{k1}
	\|\tilde{z}_h\|_{L^2(\Omega)} & \leq \frac{1}{\nu} \|\tilde{p}_h\|_{L^2(\Omega)}
	\end{align}
	and
	\begin{align} 
	\|\tilde{p}_h\|^2_{L^2(\Omega)}  \leq \vertiii{\tilde{p}_h}^2 \leq
	\xA^{SIP}(\tilde{p}_h, \tilde{p}_h) &= \int_{\Omega} (\tilde{u}_h - u_d) \tilde{p}_h~dx + \langle \lambda_h , \tilde{p}_h \rangle_h \notag \\  & \leq \int_{\Omega} (\tilde{u}_h - u_d) \tilde{p}_h~dx  \notag \\ & \leq \|\tilde{u}_h - u_d\|_{L^2(\Omega)} \|\tilde{p}_h\|_{L^2(\Omega)}  \notag \\ & \leq \bigg(\|\tilde{u}_h\|_{L^2(\Omega)} +\|u_d\|_{L^2(\Omega)}\bigg)\|\tilde{p}_h\|_{L^2(\Omega)}. \label{k2}
	\end{align}
	The Lipschitz continuity of $S$ (see Lemma 2.1 in \cite{Kunisch:2012:SOPS}) and the strong convergence of $\tilde{z}_h$ (Lemma \ref{Seq}) provide the convergence of $\tilde{u}_h$ and hence $\tilde{u}_h$ is uniformly bounded. Using \eqref{k2}, the uniform boundedness of $\tilde{p}_h$ follows and from \eqref{k1} we have the uniform boundedness of $\tilde{z}_h$. Finally \eqref{laes} implies the following estimate
	\begin{align*}
	|F(\tilde{z}_h)- F_h(\tilde{z}_h) | \leq C h(1+h).
	\end{align*}
	We apply the same arguments for estimating the second term $ |F_h(\tilde{z}) -F(\tilde{z})|$ and this finishes the proof of Theorem \ref{main} using \eqref{est1}.
\end{proof}

For the state variable, we employ $S(\tilde{z})=\tilde{u}$ and $S_h(\tilde{z}_h)=\tilde{u}_h$, together with the triangle inequality, estimates \eqref{err1}, \eqref{contest} and the Lipschitz continuity of $S_h$ gives the following
\begin{align*}
\|\tilde{u}-\tilde{u}_h\|_h&=\|S(\tilde{z})-S_h(\tilde{z}_h)\|_h =\|S(\tilde{z})-S_h(\tilde{z})+S_h(\tilde{z})-S_h(\tilde{z}_h)\|_h \nonumber \\ 
& \leq \|S(\tilde{z})-S_h(\tilde{z})\|_h + \|S_h(\tilde{z})-S_h(\tilde{z}_h)\|_h  \leq Ch + \|\tilde{z}-\tilde{z}_h\|_{L^2(\Omega)} 
\nonumber \\ 
& \leq C \sqrt{h(1+h)}
\end{align*}
which immediately implies
\begin{align}
\|\tilde{u}-\tilde{u}_h\|_{L^2(\Omega)} & \leq C \sqrt{h(1+h)}.
\end{align}
\subsection{Error estimates for the state and control: local matching} \label{subsec}
In this subsection, we prove the quasi-optimal a priori error estimates of $\tilde{z}$ and $\tilde{u}$ 
which satisfies \eqref{problem1} for ${\widehat{\Omega}= \Omega_0 \subset \subset \Omega.}$ We present the main result of this subsection in the next theorem.
\begin{theorem} \label{main1} (Error estimate for control).
	Let $\epsilon$ be such that $0 < \epsilon < \frac{d}{2p}$. Then, the following estimate holds
	\begin{align}
	\|\tilde{z}-\tilde{z}_h\|_{L^2(\Omega)} & \leq C h^{1-\epsilon}, \label{contest1}
	\end{align} 
where $\tilde{z}$ is the control variable in \eqref{problem1}, $\{\tilde{z}_h\}_{h}$ is a sequence  which is obtained using Lemma \ref{Seq} (i.e., $\{\tilde{z}_h\}$ converges to $\tilde{z}$) and the positive constant $C$ is independent of $h$ and depends on the bounds $\|g\|_{W^{2, \infty}(\Omega)}$ and $\|u\|_{W^{2,p}(\Omega)}$.
\end{theorem}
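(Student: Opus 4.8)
The plan is to follow the same variational strategy as in the proof of Theorem~\ref{main}, but to replace the energy-norm estimate \eqref{err1} by the improved $L^2(\Omega_0)$ estimate \eqref{impL2} of Theorem~\ref{L2er}; this single substitution is what upgrades the rate from $h^{1/2}$ to $h^{1-\epsilon}$. By Lemma~\ref{Seq} we have a sequence $\{\tilde z_h\}_{h>0}$ of local solutions to \eqref{disocp} with $\tilde z_h \to \tilde z$ in $L^2(\Omega)$, so for $h$ sufficiently small $\tilde z_h \in B_{\gamma}(\tilde z)$ and $F_h(\tilde z_h) \le F_h(\tilde z)$. Feeding this into the quadratic growth property (Proposition~\ref{grpp}) and inserting $\pm F_h$ exactly as in \eqref{eq11}--\eqref{est1}, I obtain
\begin{align*}
\kappa\,\|\tilde z_h - \tilde z\|_{L^2(\Omega)}^2 \le |F(\tilde z_h) - F_h(\tilde z_h)| + |F_h(\tilde z) - F(\tilde z)|,
\end{align*}
where now $F$ and $F_h$ carry the \emph{local} fidelity term $\|\cdot - u_d\|_{L^2(\Omega_0)}$, which is the crucial structural feature that lets the improved estimate enter.

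Next I would bound each discrepancy term. For a fixed control $z$, writing $S_h(z)-u_d = (S_h(z)-S(z)) + (S(z)-u_d)$ and proceeding as in \eqref{laes} via the triangle and Cauchy--Schwarz inequalities gives
\begin{align*}
|F(z)-F_h(z)| \le \tfrac12\|S_h(z)-S(z)\|_{L^2(\Omega_0)}^2 + \|S_h(z)-S(z)\|_{L^2(\Omega_0)}\,\|S(z)-u_d\|_{L^2(\Omega_0)}.
\end{align*}
Applying the improved estimate \eqref{impL2}, namely $\|S_h(z)-S(z)\|_{L^2(\Omega_0)} \le C h^{2-2\epsilon}$, at $z=\tilde z_h$ and at $z=\tilde z$ then yields
\begin{align*}
|F(z)-F_h(z)| \le C h^{4-4\epsilon} + C h^{2-2\epsilon}\,\|S(z)-u_d\|_{L^2(\Omega_0)} \le C h^{2-2\epsilon},
\end{align*}
since $4-4\epsilon \ge 2-2\epsilon$ and $\|S(z)-u_d\|_{L^2(\Omega_0)}$ is bounded. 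Inserting both bounds into the quadratic-growth inequality gives $\kappa\,\|\tilde z_h - \tilde z\|_{L^2(\Omega)}^2 \le C h^{2-2\epsilon}$, and taking square roots produces the claimed rate $h^{1-\epsilon}$.

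The step I expect to be the main obstacle is securing the constant $C$ in \eqref{impL2} \emph{uniformly} along the sequence. Unlike Theorem~\ref{main}, where only the $O(h)$ energy bound and the uniform boundedness of $\tilde z_h$ via the adjoint (cf.\ \eqref{k1}--\eqref{k2}) were needed, the estimate \eqref{impL2} carries a constant depending on $\|S(\tilde z_h)\|_{W^{2,p}(\Omega)}$ and $\|g\|_{W^{2,\infty}(\Omega)}$, so I must show $\|S(\tilde z_h)\|_{W^{2,p}(\Omega)}$ stays bounded uniformly in $h$. The route is as follows: the relation $\tilde z_h = -\tilde p_h/\nu$ together with the uniform boundedness of the discrete adjoint $\tilde p_h$ established in \eqref{k1}--\eqref{k2} controls $\tilde z_h$; combining a uniform $L^p$ bound on $\tilde z_h$ with the $W^{2,p}$ regularity of the obstacle problem (Theorem~\ref{asp1}(c)) then gives a uniform $W^{2,p}(\Omega)$ bound on the state $S(\tilde z_h)$, and hence a uniform bound on $\|S(\tilde z_h)-u_d\|_{L^2(\Omega_0)}$. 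Once this uniform regularity is in hand, the constant in \eqref{impL2} is independent of $h$ and depends only on the stated bounds $\|g\|_{W^{2,\infty}(\Omega)}$ and $\|u\|_{W^{2,p}(\Omega)}$, which completes the argument by \eqref{est1}.
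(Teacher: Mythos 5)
Your proposal is correct and follows essentially the same route as the paper's proof: the quadratic growth property via \eqref{est1}, the localized version of the splitting \eqref{laes} so that the fidelity mismatch is measured in $L^2(\Omega_0)$, and then Theorem \ref{L2er} (estimate \eqref{impL2}) in place of the energy estimate \eqref{err1} to upgrade the rate to $h^{1-\epsilon}$. Your closing discussion of the uniformity of the constant—tracking $\|S(\tilde z_h)\|_{W^{2,p}(\Omega)}$ along the sequence via the adjoint bounds \eqref{k1}--\eqref{k2} and the $W^{2,p}$ regularity of Theorem \ref{asp1}(c)—is precisely what the paper compresses into the phrase ``same ideas as in Theorem \ref{main},'' so it is a welcome elaboration rather than a deviation.
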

\begin{proof}
	From estimate \eqref{est1}, we have
	\begin{align*}
	\kappa \|\tilde{z}_h-\tilde{z}\|^2_{L^2(\Omega)} \leq |F(\tilde{z}_h)- F_h(\tilde{z}_h) | + |F_h(\tilde{z}) -F(\tilde{z})|. 
	\end{align*}
	Using \eqref{laes}, it is sufficient to bound the term 
	$\|S_h(\tilde{z}_h)-S(\tilde{z}_h)\|_{L^2(\Omega_0)} $. Employing Theorem \ref{L2er} 
	and same ideas as in Theorem \ref{main}, it holds that 
	\begin{align*}
	\|S_h(\tilde{z}_h)-S(\tilde{z}_h)\|_{L^2(\Omega_0)} \leq Ch^{2-2\epsilon} \bigg( \|u\|_{W^{2,p}(\Omega)} + \|g\|_{W^{2, \infty}(\Omega)} \bigg),
	\end{align*}
	and we finally obtain the desired result 
	\begin{align}
	\|\tilde{z}-\tilde{z}_h\|_{L^2(\Omega)}  \leq C h^{1-\epsilon}.
	\end{align}
\end{proof}
We conclude this subsection by stating the error estimates for the state variable.
\begin{corollary} \label{main2} (Error estimate for state).
	Let $\tilde{u}$ and $\tilde{u}_h$ be the corresponding continuous and discrete state variables, respectively. Then, it holds that
	\begin{align}
	\|\tilde{u}-\tilde{u}_h\|_h \leq C h^{1-\epsilon}.
	\end{align}
\end{corollary}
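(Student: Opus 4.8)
The plan is to bound the state error $\|\tilde u - \tilde u_h\|_h$ by splitting it with the triangle inequality in exactly the same way as was done in the global matching case, but now exploiting the sharper local $L^2$-estimate of Theorem~\ref{L2er} and the control estimate \eqref{contest1} in place of the coarser $O(h^{1/2})$ bounds. Concretely, I would write
\begin{align*}
\|\tilde u - \tilde u_h\|_h = \|S(\tilde z) - S_h(\tilde z_h)\|_h \le \|S(\tilde z) - S_h(\tilde z)\|_h + \|S_h(\tilde z) - S_h(\tilde z_h)\|_h,
\end{align*}
so that the first term is a consistency/discretization error for a fixed control and the second is controlled by the Lipschitz continuity of $S_h$ in \eqref{lipzSh}.

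For the second term, \eqref{lipzSh} immediately gives $\|S_h(\tilde z) - S_h(\tilde z_h)\|_h \le \|\tilde z - \tilde z_h\|_{L^2(\Omega)}$, and Theorem~\ref{main1} bounds the right-hand side by $C h^{1-\epsilon}$. For the first term, I would invoke the energy-norm estimate \eqref{err1} of Theorem~\ref{main:thm1} applied with the right-hand side fixed at $\tilde z$, which yields $\|S(\tilde z) - S_h(\tilde z)\|_h = \vertiii{\tilde u - \tilde u_h} \le C h\big(\|\tilde u\|_{H^2(\Omega)} + \|g\|_{H^2(\Omega)}\big) \le C h$, since $\|\cdot\|_h \le \vertiii{\cdot}$. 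Combining the two contributions and absorbing the $Ch$ term into the (larger, for small $h$) $C h^{1-\epsilon}$ term gives the claimed rate $\|\tilde u - \tilde u_h\|_h \le C h^{1-\epsilon}$.

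The one point requiring care is consistency of regularity hypotheses: the energy estimate \eqref{err1} is stated under $H^2$-regularity of the state, so I would note that the assumptions already in force for the local matching analysis (Theorem~\ref{L2er}, which presupposes the $W^{2,p}(\Omega)$-regularity of $u$ from Theorem~\ref{asp1}(c) together with $g \in W^{2,\infty}(\Omega)$) supply at least the $H^2$-regularity needed to apply Theorem~\ref{main:thm1} at the fixed control $\tilde z$. I do not anticipate a genuine obstacle here: the estimate is essentially a corollary obtained by feeding the improved control estimate \eqref{contest1} into the same triangle-inequality machinery that produced the state estimate in the global case, the only improvement being that the dominant term is now $h^{1-\epsilon}$ rather than $\sqrt{h(1+h)}$. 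The mildly delicate bookkeeping is simply confirming that the $O(h)$ discretization contribution is dominated by $O(h^{1-\epsilon})$ for all sufficiently small $h$ and every admissible $\epsilon \in (0, \tfrac{d}{2p})$, which is immediate.
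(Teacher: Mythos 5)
Your proposal is correct and follows exactly the route the paper intends: the paper states Corollary~\ref{main2} without proof, precisely because it is the same triangle-inequality argument displayed after Theorem~\ref{main} in the global matching case ($\|S(\tilde z)-S_h(\tilde z)\|_h + \|S_h(\tilde z)-S_h(\tilde z_h)\|_h \le Ch + \|\tilde z - \tilde z_h\|_{L^2(\Omega)}$), now fed with the improved control estimate \eqref{contest1} instead of \eqref{contest}. Your additional remark on the regularity hypotheses ($W^{2,p}$ implying the $H^2$-regularity needed for \eqref{err1}) is a sound piece of bookkeeping that the paper leaves implicit.
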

\section{Conclusions} \label{sec6}
In this article, we have derived the a priori error estimates for the discontinuous Galerkin approximation to the optimal control of obstacle problem. The key ingredients of the analysis are the quadratic growth property, the discrete maximum property and improved a priori error estimates for solutions of the obstacle problem. The error estimates are quasi-optimal when $\widehat{\Omega} = \Omega_0 \subset\subset \Omega$ in \eqref{problem1}. As a part of future work, we plan to develop a posteriori error estimates using the results from this paper.
\appendix

\section{Proof} \label{sec7}

\begin{proof}
		 [\textbf{Proof of Lemma \ref{normeq}}]
	For any $w \in H^1(\Omega,\cT_h)$, define the function $v \in \cV \cap H^2(\Omega)$ that satisfies
	\begin{align*}
	-\Delta v&=w \quad \text{in } \Omega,\\ 
	v&=0 \quad \text{on } \partial\Omega.
	\end{align*}
	Using the integration by parts formula \cite[Appendix~C]{L.CEvans:2022} and discrete Cauchy-Schwartz inequality \cite[Appendix~C]{L.CEvans:2022}, it holds that
	{\small
		\begin{align*}
		\|w\|^2_{L^2(\Omega)} &= \int_{\Omega} w~(-\Delta v)~dx = \sum_{K \in \cT_h} \int_{K} \nabla w \cdot \nabla v ~dx - \sum_{e \in \cE_h} \int_e \sjump{w} \frac{\partial v}{\partial n}~ds
		\\ & \leq \sum_{K \in \cT_h}  \|\nabla w\|_{L^2(K)} \|\nabla v\|_{L^2(K)}+ \bigg(\sum_{e \in \cE_h} h_e^{-\frac{1}{2}} \big\|\sjump{w}\big\|_{L^2(e)} h_e^{\frac{1}{2}} \big\|\frac{\partial v}{\partial n}\big\|_{L^2(e)}  \bigg) 
		\\ & \leq \bigg(\sum_{K \in \cT_h} \|\nabla w\|^2_{L^2(K)} \bigg)^{\frac{1}{2}}  \bigg(\sum_{K \in \cT_h} \|\nabla v\|^2_{L^2(K)} \bigg)^{\frac{1}{2}} \\ & \hspace*{3cm}+ \bigg(\sum\limits_{ e \in \cE_h} \frac{1}{h_e}\big\|\sjump{w}\big\|^2_{L^2(e)} \bigg)^{\frac{1}{2}} \bigg(\sum\limits_{ e \in \cE_h} h_e\big\|\frac{\partial v}{\partial n}\big\|^2_{L^2(e)} \bigg)^{\frac{1}{2}} 
		\\ & \hspace*{-1cm} \lesssim  \bigg(\sum\limits_{ K \in \cT_h} \big\|\nabla w\big\|^2_{L^2(K)} + \sum\limits_{ e \in \cE_h} \frac{1}{h_e}\big\|\sjump{w}\big\|^2_{L^2(e)} \bigg)^{\frac{1}{2}} \bigg(\sum\limits_{ K \in \cT_h} \|\nabla v\|^2_{L^2(K)} + \sum\limits_{ e \in \cE_h} h_e\big\|\frac{\partial v}{\partial n}\big\|^2_{L^2(e)}\bigg)^{\frac{1}{2}}.
		\end{align*}}
	Employing the standard discrete trace inequality \cite[equation (10.3.8) on Pg. 282]{BScott:2008:FEM}, the following holds
	\begin{align*}
	h_e\big\|\frac{\partial v}{\partial n}\big\|^2_{L^2(e)} \leq C \|v\|^2_{W^{2,2}(K)},
	\end{align*} where $C$ is a positive constant. Finally using $-\Delta v=w$, we have the proof.
\end{proof}

\bibliographystyle{siamplain}
\bibliography{AKR}
\end{document}